\let\oldtocsection=\tocsection
\let\oldtocsubsection=\tocsubsection 
\let\oldtocsubsubsection=\tocsubsubsection
\renewcommand{\tocsection}[2]{\vspace{0.5em}\hspace{0em}\oldtocsection{#1}{#2}}
\renewcommand{\tocsubsection}[2]{\vspace{0.5em}\hspace{1em}\oldtocsubsection{#1}{#2}}
\renewcommand{\tocsubsubsection}[2]{\vspace{0.5em}\hspace{2em}\oldtocsubsubsection{#1}{#2}}
\let\originallesssim\lesssim
\DeclareRobustCommand{\lesssim}{%
  \mathrel{\mathpalette\lowersim\originallesssim}%
}
\newtheorem{theoreme}{Theorem}[section]
\newtheorem{pro}[theoreme]{Proposition}
\newtheorem{lemma}[theoreme]{Lemma}
\theoremstyle{definition}
\numberwithin{equation}{section}
 \renewenvironment{proof}{{\bfseries \noindent Proof.}}{\demo}
\newcommand\xqed[1]{%
  \leavevmode\unskip\penalty9999 \hbox{}\nobreak\hfill
  \quad\hbox{#1}}
\newcommand\demo{\xqed{$\square$}}
\def\u2{\u^2}
\def\u3{\u^3}
\def\u4{\u^4}
\def\u5{\u^5}
\def\y1{\y^1}
\def\y2{\y^2}
\def\y3{\y^3}
\def\y4{\y^4}
\def\y5{\y^5}
\def\R{\mathbb R}
\def\la {{\lambda}}
\newcommand {\nc}   {\newcommand}
\nc {\be}   {\begin{equation}} \nc {\ee}   {\end{equation}} \nc
\nc {\eeq}  {\end{eqnarray}} \nc {\beqs}
\nc {\eeqs} {\end{eqnarray*}}
\def\edc{\end{document}}
\providecommand{\abs}[1]{\lvert#1\rvert}
\DeclareMathOperator{\supp}{supp}
\begin{document}
\title[\fontsize{7}{9}\selectfont  ]{Polynomial energy decay rate of a 2D Piezoelectric beam with magnetic effect on a rectangular domain without geometric conditions}
\author{Mohammad Akil$^{1}$ and Virginie R\'egnier$^{1}$  \vspace{0.5cm}\\
$^1$Univ. Polytechnique  Hauts-de-France, INSA Hauts-de-France,  CERAMATHS-Laboratoire de Mat\'eriaux C\'eramiques et de Math\'ematiques, F-59313 Valenciennes, France\\ \\ 
Email: mohammad.akil@uphf.fr, virginie.regnier@uphf.fr}

\setcounter{equation}{0}
\begin{abstract}
In this paper, we investigate the stability of coupled equations modelling a 2D piezoelectric beam with magnetic effect with only one local viscous damping on a rectangular domain without geometric conditions. We prove that the energy of the system decays polynomially with the rate $t^{-1}$.\\[0.1in]
\textbf{Keywords.} coupled wave equations; viscous damping; $C_0$-semigroup; polynomial  stability; rectangular domains.  
\end{abstract}

\maketitle
\pagenumbering{roman}
\maketitle
\tableofcontents
\pagenumbering{arabic}
\setcounter{page}{1}
\vspace{-1.5cm}\section{Introduction} 

It is known, since the 19th century, that materials such as quartz, Rochelle salt and barium titanate under pressure produce electric charge\slash voltage: this phenomenon is called the direct piezoelectric effect and was discovered by brothers Pierre and Jacques Curie in 1880. These same materials, when subjected to an electric field, produce proportional geometric tension. Such a  phenomenon is known as the converse piezoelectric effect and was discovered by Gabriel Lippmann in 1881.\\
Morris and Ozer proposed a piezoelectric beam model with a magnetic effect, based on the Euler-Bernoulli and Rayleigh  beam theory for small displacement (the same equations for the model are obtained if Midlin-Timoshenko small displacement assumptions are used). They considered an elastic beam covered by a piezoelectric material on its upper and lower surfaces, isolated by the edges and connected to an external electrical circuit to feed charge to the electrodes. As the voltage is prescribed at the electrodes, the following Lagrangian is considered 
\begin{equation}\label{Lagrangian}
	\mathcal{L}=\int_0^T\left[\bold{K-(P+E)+B+W}\right]dt,
\end{equation}
where $\bold{K}$, $\bold{P+E}$, $\bold{B}$ and $\bold{W}$ represent the (mechanical) kinetic energy, total stored energy, magnetic energy (electrical kinetic) of the beam and the work done by external forces, respectively, for a beam with length $L$ and thickness $h$ and considering $v=v(x,t)$, $w=w(x,t)$ and $p=p(x,t)$ as functions that represent the longitudinal displacement of the center line, transverse displacement of the beam and the total load of the electric displacement along the transverse direction at each point $x$, respectively. So, one can assume that 
\begin{equation}\label{newL}
	\begin{array}{cc}
	\displaystyle
	\bold{P+E}=\frac{h}{2}\int_0^L\left[\alpha\left(v_x^2+\frac{h^2}{12}w_{xx}^2-2\gamma\beta v_xp_x+\beta p_x^2\right)\right]dx, & \displaystyle
	\bold{B}=\frac{\mu h}{2}\int_0^Lp_t^2dx,\\[0.1in]
	\displaystyle
	\bold{K}=\frac{\rho h}{2}\int_0^L\left[v_t^2+\left(\frac{h^2}{12}+1\right)\omega_t^2 \right] dx,& \displaystyle 
	\bold{W}=-\int_0^Lp_xV(t)dx,
	\end{array}
\end{equation}
where $V(t)$ is the voltage applied at the electrode. From Hamilton's principle for admissible displacement variations $\left\{v,w,p\right\}$ of $\mathcal{L}$  and observing that the only external force acting on the beam is the voltage at the electrodes (the bending equation is decoupled, see \cite{Morris-Ozer2013,Morris-Ozer2014}), they got the system 
\begin{equation}\label{piezo}
\begin{array}{c}
\rho v_{tt}-\alpha v_{xx}+\gamma \beta p_{xx}=0,\\
\mu p_{tt}-\beta p_{xx}+\gamma \beta v_{xx}=0,
\end{array}	
\end{equation}
where $\rho, \alpha, \gamma, \mu$ and $\beta$ denote the mass density, elastic stiffness, piezoelectric coefficient, magnetic permeability, water resistance coefficient of the beam and the prescribed voltage on electrodes of beam, respectively, and in addition, the relationship 
\begin{equation}\label{alpha1}
\alpha=\alpha_1+\gamma^2\beta.	
\end{equation}
They assumed that the beam is fixed at $x=0$ and free at $x=L$, and thus they got (from modelling) the following boundary conditions  
\begin{equation}\label{bc}
\begin{array}{c}
v(0,t)=\alpha v_x(L,t)-\gamma \beta p_x(L,t)=0,\\
p(0,t)=\beta p_x(L,t)-\gamma \beta v_x(L,t)=- \displaystyle{\frac{V(t)}{h}}.	
\end{array}	
\end{equation}
Then, the authors considered $V(t)=kp_t(L,t)$ (electrical feedback controller) in \eqref{bc} and established strong stabilization for almost all system parameters and exponential stability for system parameters in a null measure set. In \cite{Ramos2018} Ramos et al. inserted a dissipative term $\delta v_t$ in the first equation of \eqref{piezo} , where $\alpha>0$ is a constant and considered the following boundary conditions 
\begin{equation}\label{Ramos-bc}
\begin{array}{c}
v(0,t)=\alpha v_x(L,t)-\gamma \beta p_x(L,t)=0,\\
p(0,t)=\beta p_x(L,t)-\gamma \beta v_x(L,t)=0.	
\end{array}	
\end{equation}
The authors showed, by using energy method, that the system's energy decays exponentially. This means that the friction term and the magnetic effect work together in order to uniformly stabilize the system. In \cite{Abdelaziz2}, the authors considered a one-dimensional piezoelectric beam with magnetic effect damped with a weakly nonlinear feedback in the presence of a nonlinear delay term.They established an energy decay rate under appropriate assumptions on the weight of the delay. In \cite{AnLiuKong}, the authors studied the stability of piezoelectric beams with magnetic effects of fractional derivative type and with/ without thermal effects of Fourier's law. They obtained an exponential stability by taking two boundary fractional dampings and additional thermal effects. 
In \cite{Zhang2022}, the authors studied the longtime behavior of a kind of fully magnetic effected nonlinear multi-dimensional piezoelectric beams with viscoelastic infinite memory. An exponential decay of the solution to the nonlinear coupled PDE's system is established by the energy estimation method under certain conditions.
\noindent In \cite{https://doi.org/10.48550/arxiv.2204.00283}, the author investigates the stabilization of a system of piezoelectric beams under (Coleman or Pipkin)-Gurtin thermal law with magnetic effect. First, he study the Piezoelectric-Coleman-Gurtin system and he obtain an exponential stability result. Next, he consider the Piezoelectric-Gurtin-Pipkin system and he establish a polynomial energy decay rate of type $t^{-1}$. In \cite{Abdelaziz1}, the authors considered a one-dimensional dissipative system of piezoelectric beams with magnetic effect and localized viscous damping. They proved that the system is exponentially stable using a damping mechanism acting only on one component and on a small part of the beam. \\
\noindent What happens if we go to dimension 2? For this aim, let $\Omega=\square=(0,1)^2$ be the unit square  and $\partial\Omega=\Gamma_0\cup \Gamma_1$ (See Figure \ref{SQUARE}), such that 
 $$
\Gamma_{0,1}=\left(\left\{0\right\}\times (0,1)\right)\cup \left((0,1)\times \left\{0\right\}\right)\quad \text{and}\quad \Gamma_{1,1}=\left(\left\{1\right\}\times (0,1)\right)\cup ((0,1)\times \left\{1\right\}).
$$
\noindent In the present work, we consider the fully dynamic magnetic effects in a model for a piezoelectric beam with only one viscous damping on a rectangular domain, whose dynamic behaviour is described by an elasticity equation and a charge equation coupled via the piezoelectric constants, which is given as follows 
\begin{equation}\label{1PIEZO-2D}
\left\{\begin{array}{l}
\rho v_{tt}(X,t)-\alpha \Delta v(X,t)+\gamma \beta \Delta p(X,t)+ d(\cdot) v_t(X,t)=0, \quad X=(x,y)\in \Omega,\ t>0,\\[0.1in]
\mu p_{tt}(X,t)-\beta \Delta p(X,t)+\gamma\beta \Delta v(X,t)=0,\quad X=(x,y)\in \Omega,\ t>0,
\end{array}
\right.
\end{equation}
where $v(X,t)$ and $p(X,t)$ respectively denote the transverse displacement of the beam and the total load of the electric displacement along the transverse direction at each point $x\in \Omega$. The constant coefficients $\rho, \alpha, \gamma, \mu, \beta>0$ are the mass density per unit volume, elastic stiffness, piezoelectric coefficient, magnetic permeability, impermeability coefficient of the beam, respectively and satisfy $\alpha>\gamma^2\beta$. System \eqref{1PIEZO-2D} is subjected to the following boundary conditions 
\begin{equation*}\label{2PIEZO-2D}
\left\{\begin{array}{l}
v(X,t)=p(X,t)=0,\quad X=(x,y)\in \Gamma_0,\ t>0,\\[0.1in]
\displaystyle 
\alpha \frac{\partial v}{\partial \nu}(X,t)-\gamma\beta\frac{\partial p}{\partial \nu}(X,t)=\beta \frac{\partial p}{\partial \nu}(X,t)-\gamma \beta\frac{\partial v}{\partial \nu}(X,t)=0,\quad X=(x,y)\in \Gamma_1,\ t>0,	
\end{array}
\right.	
\end{equation*}
Using the fact that $\alpha=\alpha_1+\gamma^2\beta$ (see \eqref{alpha1}), then the above boundary conditions can be replaced by 
\begin{equation}\label{2PIEZO-2D}
\left\{\begin{array}{l}
v(X,t)=p(X,t)=0,\quad X=(x,y)\in \Gamma_0,\ t>0,\\[0.1in]
\displaystyle 
\frac{\partial v}{\partial \nu}(X,t)=\frac{\partial p}{\partial \nu}(X,t)=0,\quad X=(x,y)\in \Gamma_1,\ t>0.
\end{array}
\right.	
\end{equation}
System \eqref{1PIEZO-2D} is considered with the following initial data
\begin{equation}\label{Piezo-Initial}
v(X,0)=v_0(X),\quad v_t(X,0)=v_1(X),\quad p(X,0)=p_0(X),\quad p_t(X,0)=p_1(X),\quad X=(x,y)\in \Omega. 	
\end{equation}

\noindent Let $d\in L^{\infty}(0,1)$,  depending only on $x$ such that
\begin{equation}\label{dsquare}
d(x)\geq d_0>0\ \text{in}\ (a,b)\quad \text{and}\quad d(x)=0\ \text{in}\ (0,1)\backslash (a,b).	
\end{equation}
We set $\omega_d=(\supp d)^{\circ}\times (0,1)$ (See Figure \ref{SQUARE}).\\

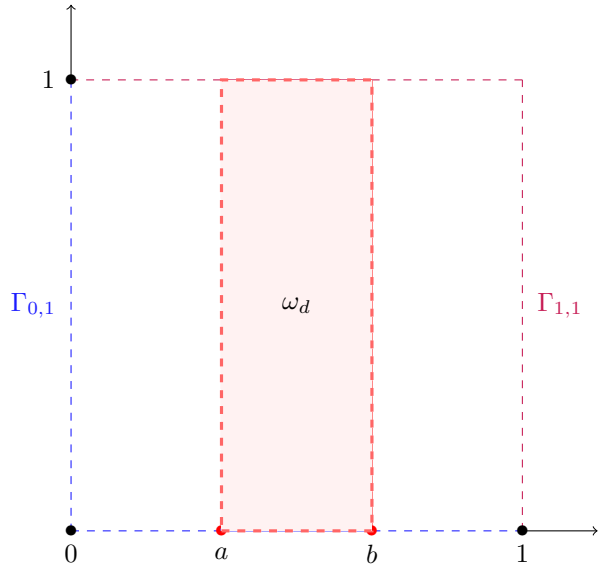
\begin{figure}[h!]
\begin{tikzpicture}
\draw[dashed,color=blue!90](0,0)--(6,0);
\draw[dashed,color=purple!90](6,0)--(6,6);
\draw[dashed,color=purple!90](6,6)--(0,6);
\draw[dashed,color=blue!90](0,6)--(0,0);
\draw[dashed,red](2,0)--(4,0);
\draw[dashed,red](4,0)--(4,6);
\draw[dashed,red](4,6)--(2,6);
\draw[dashed,red](2,6)--(2,0);
\node[blue!90] at (-0.5,3) {\scalebox{1}{$\Gamma_{0,1}$}};
\node[purple!90] at (6.5,3) {\scalebox{1}{$\Gamma_{1,1}$}};
\node[red] at (2,0) {\scalebox{1}{$\bullet$}};
\node[black] at (2,-0.3) {\scalebox{1}{$a$}};
\node[red] at (4,0) {\scalebox{1}{$\bullet$}};
\node[black] at (4,-0.3) {\scalebox{1}{$b$}};
\node[black] at (6,0) {\scalebox{1}{$\bullet$}};
\node[black] at (6,-0.3) {\scalebox{1}{$1$}};
\node[black] at (0,0) {\scalebox{1}{$\bullet$}};
\node[black] at (0,-0.3) {\scalebox{1}{$0$}};
\node[black] at (0,6) {\scalebox{1}{$\bullet$}};
\node[black] at (-0.3,6) {\scalebox{1}{$1$}};
\draw[color=blue!60, fill=blue!5, very thick,dashed] (2,0)--(4,6);	
\filldraw[color=red!60, fill=red!5, very thick,dashed] (2,0) rectangle (4,6);
\node[black] at (3,3) {\scalebox{1}{$\omega_d$}};
\draw[arrows=->] (6,0) -- (7,0);
\draw[arrows=->] (0,6) -- (0,7); 
\end{tikzpicture}
\caption{Model Describing $\Omega$.}\label{SQUARE}	
\end{figure}
\noindent There exist a few results concerning wave equations or coupled wave equations on a rectangular or cylindrical domain  with different kinds of damping \cite{RaoLiu01,Stahn2017,BPS,doi:10.1137/20M1332499, Hayek, Akil2022, https://doi.org/10.48550/arxiv.2111.14554}. But to the best of our knowledge, it seems that no result in the literature exists concerning the case of a 2D or multidimensional Piezoelectric beam  with local damping. \\
\\
\noindent This paper is organized as follows: the second section is devoted to the case of a rectangular domain.  The well-posedness of the system is proved by using semigroup approach. Next, by combining an orthonormal basis decomposition with frequency-multiplier techniques using appropriate cut-off functions, we prove a polynomial energy decay rate of type $t^{-1}$.\\

\section{Stability of a 2D piezoelectric beam on a rectangular domain}
\noindent In this section, we study the stability result of a $2D$ piezoelectric beam with magnetic effect on a rectangular domain.
\subsection{Well-Posedness} The energy of System $\eqref{1PIEZO-2D}-\eqref{Piezo-Initial}$, is given by 
$$
E_1(t)=\frac{1}{2}\int_{\Omega_1}(\rho \abs{v_t}^2+\alpha_1\abs{\nabla v}^2)dX+\frac{1}{2}\int_{\Omega_1}\left(\mu\abs{p_t}^2+\beta\abs{\gamma\nabla v-\nabla p}^2\right)dX.
$$
\begin{lemma}\label{denergy}
Let $U=(v,v_t,p,p_t)$ be a regular solution of system \eqref{1PIEZO-2D}-\eqref{Piezo-Initial}. Then, the energy $E_1(t)$ satisfies the following estimation 
\begin{equation}\label{denergy1}
\frac{d}{dt}E_1(t)=-\int_{\Omega_1}d(x)\abs{v_t(X,t)}^2dX.
\end{equation}	
\end{lemma}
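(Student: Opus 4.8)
The plan is to differentiate $E_1(t)$ in time along a regular solution, substitute the two evolution equations of \eqref{1PIEZO-2D}, and then integrate by parts so that all the second–order spatial terms recombine into the expressions $\rho v_{tt}-\alpha\Delta v+\gamma\beta\Delta p$ and $\mu p_{tt}-\beta\Delta p+\gamma\beta\Delta v$, leaving only the contribution of the damping term $d(x)v_t$.

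First I would differentiate under the integral sign, which is legitimate for a regular solution, obtaining (taking real parts if the solution is complex-valued)
\[
\frac{d}{dt}E_1(t)=\int_\Omega\big(\rho\, v_t v_{tt}+\alpha_1\,\nabla v\cdot\nabla v_t\big)\,dX+\int_\Omega\big(\mu\, p_t p_{tt}+\beta\,(\gamma\nabla v-\nabla p)\cdot(\gamma\nabla v_t-\nabla p_t)\big)\,dX.
\]
Expanding the last scalar product and using the relation $\alpha=\alpha_1+\gamma^2\beta$ from \eqref{alpha1}, the coefficient of $\nabla v\cdot\nabla v_t$ becomes exactly $\alpha$, so the right-hand side collapses to
\[
\int_\Omega\rho\, v_t v_{tt}\,dX+\int_\Omega\alpha\,\nabla v\cdot\nabla v_t\,dX+\int_\Omega\mu\, p_t p_{tt}\,dX+\int_\Omega\beta\,\nabla p\cdot\nabla p_t\,dX-\gamma\beta\int_\Omega\big(\nabla v\cdot\nabla p_t+\nabla p\cdot\nabla v_t\big)\,dX.
\]

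Next I would apply Green's formula to each of the four gradient integrals. The boundary terms produced are of the form $\int_{\partial\Omega}\partial_\nu v\, v_t$, $\int_{\partial\Omega}\partial_\nu p\, p_t$, $\int_{\partial\Omega}\partial_\nu v\, p_t$ and $\int_{\partial\Omega}\partial_\nu p\, v_t$; on $\Gamma_0$ the Dirichlet conditions $v=p=0$ force $v_t=p_t=0$, while on $\Gamma_1$ the Neumann conditions $\partial_\nu v=\partial_\nu p=0$ annihilate the normal derivatives, so every boundary integral vanishes. What is left is
\[
\frac{d}{dt}E_1(t)=\int_\Omega v_t\big(\rho v_{tt}-\alpha\Delta v+\gamma\beta\Delta p\big)\,dX+\int_\Omega p_t\big(\mu p_{tt}-\beta\Delta p+\gamma\beta\Delta v\big)\,dX.
\]
Substituting the first equation of \eqref{1PIEZO-2D} the first integrand equals $-d(x)\abs{v_t}^2$, and substituting the second equation the second integrand vanishes identically, which gives \eqref{denergy1}.

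The computation is essentially routine; the only points requiring care are the correct pairing of the Dirichlet and Neumann conditions on $\Gamma_0$ and $\Gamma_1$ so that all boundary terms — in particular the mixed ones coming from the coupling terms $\gamma\beta\Delta p$ and $\gamma\beta\Delta v$ — drop out, together with the algebraic regrouping through $\alpha=\alpha_1+\gamma^2\beta$ that reconciles the elastic part of $E_1$ (written with $\alpha_1$ and the combination $\gamma\nabla v-\nabla p$) with the operator $\alpha\Delta v$ appearing in the equation.
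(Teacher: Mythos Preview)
Your proof is correct and follows essentially the same approach as the paper: the paper multiplies the two equations of \eqref{1PIEZO-2D} by $\overline{v_t}$ and $\overline{p_t}$, integrates by parts over $\Omega$, adds the results and uses $\alpha=\alpha_1+\gamma^2\beta$, which is exactly the computation you perform, only organized in the reverse order (differentiate $E_1$ first, then substitute the equations). Your explicit treatment of the boundary terms on $\Gamma_0$ and $\Gamma_1$ is in fact more detailed than what the paper writes.
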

\begin{proof}
Multiplying the first and the second equation of \eqref{1PIEZO-2D} by $\overline{u_t}$ et $\overline{y_t}$ respectively, integrating by parts over $\Omega_1$, we get 

\begin{equation}\label{denergy2}
\frac{1}{2}\frac{d}{dt}\left(\rho\int_{\Omega_1}\abs{v_t}^2 dX +\alpha\int_{\Omega_1}\abs{\nabla v}^2 dX \right)-\gamma\beta\Re\left(\int_{\Omega_1}\nabla p\cdot \nabla\overline{v_t} dX \right)+\int_{\Omega_1}d(x)\abs{v_t(X,t)}^2dX=0	
\end{equation}
and
\begin{equation}\label{denergy3}
\frac{1}{2}\frac{d}{dt}\left(\mu\int_{\Omega_1}\abs{y_t}^2dX+\beta\int_{\Omega_1}\abs{\nabla y}^2dX\right)-\gamma\beta\Re\left(\int_{\Omega_1}\nabla u\cdot \nabla\overline{y_t}dX\right)=0.	
\end{equation}
Adding \eqref{denergy2} and \eqref{denergy3}, and using the fact that $\alpha=\alpha_1+\gamma^2\beta$, we get the desired equation \eqref{denergy1}. The proof has been completed.	
\end{proof}

\noindent From \eqref{denergy1}, it follows that System \eqref{1PIEZO-2D}-\eqref{Piezo-Initial} is dissipative.  Now, let us define the energy space $\mathcal{H}$ by 
\begin{equation}\label{Hilbert}
\mathcal{H}_{\square}=\left(W_1\times L^2(\Omega_1)\right)^2,	
\end{equation}
where $W_1=\{f\in H^1(\Omega_{1});\ f=0\ \text{on}\ \Gamma_{0,1}\}$. $\mathcal{H}_{\square}$ is a Hilbert space, equipped with the inner product defined by 
\begin{equation}\label{InnerProduct}
\left<U,U_1\right>_{\mathcal{H}_{\square}}=\int_{\Omega_1}(\alpha_1\nabla v\cdot\nabla\overline{v_1}+\rho z\overline{z_1}+\beta\left(\gamma\nabla v-\nabla p\right)\cdot \left(\gamma\nabla \overline{v_1}-\nabla\overline{p_1}\right)+\mu q\overline{q_1})dX,
\end{equation}
for all $U=(v,z,p,q)^{\top}$ and $U_1=(v_1,z_1,p_1,q_1)^{\top}$ in $\mathcal{H}_{\square}$. The expression $\|\cdot\|_{\mathcal{H}_{\square}}$ will denote the corresponding norm. We define the unbounded linear operator $\mathcal{A}_{\square}:D(\mathcal{A}_\square)\subset \mathcal{H}_{\square}\rightarrow \mathcal{H}_{\square}$ by 
$$
D\left(\mathcal{A}_{\square}\right):= \left\{U:=(v,z,p,q)\in \mathcal{H}_{\square}; z,q\in W_1,\ \Delta v,\Delta z\in L^2(\Omega_1)\quad \text{and}\quad \frac{\partial v}{\partial \nu}=\frac{\partial p}{\partial \nu}=0\ \text{on}\ \Gamma_{1,1}\right\}.
$$
and 
\begin{equation}\label{Operator1}
\mathcal{A}_{\square}(v,z,p,q)=\left(z,\frac{1}{\rho}\left(\alpha \Delta v-\gamma\beta \Delta p-d\,p\right), q, \frac{1}{\mu}\left(\beta\Delta p-\gamma\beta \Delta v\right)\right). 
\end{equation}
If $U=(v,v_t,p,p_t)^{\top}$ is the state of System $\eqref{1PIEZO-2D}-\eqref{Piezo-Initial}$, then this system is transformed into the first order evolution equation on the Hilbert space $\mathcal{H}_{\square}$ given by 
\begin{equation}\label{Abstract1}
U_t=\mathcal{A}_{\square}U,\quad U(0)=U_0,	
\end{equation}
where $U_0=(v_0,v_1,p_0,p_1)^{\top}$. It is easy to see that for all $U=(v,z,p,q)\in D(\mathcal{A}_{\square})$, we have 
\begin{equation}\label{REAUU}
\Re\left(\left<\mathcal{A}_{\square}U,U\right>_{\mathcal{H}_{\square}}\right)=-\int_{\Omega_1}d(x)\abs{z(X)}^2dX\leq 0,
\end{equation}
which implies that $\mathcal{A}_{\square}$ is dissipative. Now, let $F=(f_1,f_2,f_3,f_4)\in \mathcal{H}$,  using the Lax-Milgram Theorem, one proves  the existence of $U\in D(\mathcal{A}_{\square})$, solution of the equation
$$
-\mathcal{A}_{\square}U=F.
$$
Then, the unbounded linear operator $\mathcal{A}_{\square}$ is $m-$dissipative in the energy space 
$\mathcal{H}_{\square}$ and consequently $0\in \rho(\mathcal{A}_{\square})$. Thus, $\mathcal{A}_{\square}$ generates a $C_0-$semigroup of contractions $\left(e^{t\mathcal{A}_{\square}}\right)_{t\geq 0}$ following the Lumer-Phillips theorem. The solution of the Cauchy problem \eqref{Abstract1} admits the following representation 
$$
U(t)=e^{t\mathcal{A}_{\square}}U_0,\quad t\geq 0,
$$
which leads to the well-posedness of \eqref{Abstract1}. Hence, we have the following result.
\begin{theoreme}
Let $U_{0}\in \mathcal{H}_{\square}$, then System \eqref{Abstract1} admits a unique weak solution $U$ satisfying 
$$
U\in C^0(\mathbb{R}^+,\mathcal{H}_{\square}).
$$
Moreover, if $U_0\in D(\mathcal{A}_{\square})$, then Problem \eqref{Abstract1} admits a unique strong solution $U$ satisfying 
$$
U\in C^1(\mathbb{R}^{+},\mathcal{H}_{\square})\cap C^0(\mathbb{R}^+,D(\mathcal{A}_{\square})). 
$$ 	
\end{theoreme}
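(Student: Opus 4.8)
The plan is to deduce everything from the Lumer--Phillips theorem: once $\mathcal{A}_\square$ is shown to be $m$-dissipative on $\mathcal{H}_\square$, it generates a $C_0$-semigroup of contractions, and the two regularity statements are then the standard consequences of semigroup theory applied to the Cauchy problem \eqref{Abstract1}. Dissipativity is already in hand from \eqref{REAUU}: integrating by parts with the boundary conditions built into $D(\mathcal{A}_\square)$ gives $\Re\langle\mathcal{A}_\square U,U\rangle_{\mathcal{H}_\square}=-\int_{\Omega_1}d(x)|z|^2\,dX\le 0$ for all $U=(v,z,p,q)\in D(\mathcal{A}_\square)$. So the substantive point is maximality, i.e. surjectivity of $-\mathcal{A}_\square$ (equivalently $0\in\rho(\mathcal{A}_\square)$).

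For maximality I would fix $F=(f_1,f_2,f_3,f_4)\in\mathcal{H}_\square$ and solve $-\mathcal{A}_\square U=F$. Reading \eqref{Operator1} componentwise forces $z=-f_1\in W_1$ and $q=-f_3\in W_1$, and what remains is the stationary coupled elliptic system
\[
\alpha\Delta v-\gamma\beta\Delta p=\rho f_2+d f_1,\qquad \beta\Delta p-\gamma\beta\Delta v=\mu f_4 \quad\text{in }\Omega_1,
\]
with $v=p=0$ on $\Gamma_{0,1}$ and $\partial_\nu v=\partial_\nu p=0$ on $\Gamma_{1,1}$. The efficient way to treat this is to introduce $w:=\gamma v-p$; using $\alpha=\alpha_1+\gamma^2\beta$ the system decouples into two scalar mixed Dirichlet--Neumann problems,
\[
\beta\Delta w=-\mu f_4,\qquad \alpha_1\Delta v=\rho f_2+d f_1+\gamma\mu f_4,
\]
each with homogeneous Dirichlet data on $\Gamma_{0,1}$ and homogeneous Neumann data on $\Gamma_{1,1}$ (note $w=0$ on $\Gamma_{0,1}$ and $\partial_\nu w=0$ on $\Gamma_{1,1}$ follow from those of $v,p$). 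For each problem the weak formulation has bilinear form $(\varphi,\psi)\mapsto\int_{\Omega_1}\nabla\varphi\cdot\nabla\bar\psi\,dX$ on $W_1\times W_1$, which is continuous and coercive since $\Gamma_{0,1}$ has positive surface measure, so Poincaré's inequality applies on $W_1$; the right-hand sides define bounded antilinear functionals on $W_1$ because $f_2,f_4\in L^2(\Omega_1)$, $d\in L^\infty(0,1)$ and $f_1\in W_1$. Lax--Milgram then yields unique $w,v\in W_1$; setting $p:=\gamma v-w\in W_1$ and reading the equations back gives $\Delta v,\Delta p\in L^2(\Omega_1)$ together with the vanishing Neumann traces on $\Gamma_{1,1}$, so that $U=(v,-f_1,p,-f_3)\in D(\mathcal{A}_\square)$ solves $-\mathcal{A}_\square U=F$, and a short estimate bounds $\|U\|_{\mathcal{H}_\square}$ in terms of $\|F\|_{\mathcal{H}_\square}$.

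With $\mathcal{A}_\square$ $m$-dissipative, Lumer--Phillips produces the contraction semigroup $(e^{t\mathcal{A}_\square})_{t\ge0}$ on $\mathcal{H}_\square$; then for $U_0\in\mathcal{H}_\square$ the function $U(t)=e^{t\mathcal{A}_\square}U_0$ is the unique weak solution of \eqref{Abstract1} and lies in $C^0(\mathbb{R}^+,\mathcal{H}_\square)$, while for $U_0\in D(\mathcal{A}_\square)$ it is a strong solution, $U\in C^1(\mathbb{R}^+,\mathcal{H}_\square)\cap C^0(\mathbb{R}^+,D(\mathcal{A}_\square))$. The only genuine work is the maximality step, and within it the construction of a coercive variational formulation from the coupled system; the decoupling via $w=\gamma v-p$ together with $\alpha_1>0$ makes this essentially routine, so I expect the main (minor) obstacle to be the bookkeeping of the mixed boundary conditions and the verification that $\Delta v,\Delta p\in L^2(\Omega_1)$, so that $U$ indeed lands in $D(\mathcal{A}_\square)$.
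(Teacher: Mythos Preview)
Your proposal is correct and follows essentially the same route as the paper: dissipativity from \eqref{REAUU}, maximality via Lax--Milgram, and then Lumer--Phillips plus standard semigroup regularity. The paper merely asserts the Lax--Milgram step without details, whereas you supply an explicit decoupling $w=\gamma v-p$ (exploiting $\alpha=\alpha_1+\gamma^2\beta$) that makes coercivity transparent; this is a nice elaboration but not a different method.
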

\subsection{Polynomial Stability}
This subsection is devoted to showing the polynomial stability of System \eqref{1PIEZO-2D}-\eqref{Piezo-Initial}. Our main result in this subsection is the following theorem. 
\begin{theoreme}\label{Thm-Pol}
There exists a constant $C>0$ independent of $U_0$, such that the energy of System \eqref{1PIEZO-2D}-\eqref{Piezo-Initial} satisfies the following estimation 
\begin{equation}\label{ENE-EST1}
E(t)\leq \frac{C}{t}\|U_0\|^2_{D(\mathcal{A}_{\square})},\quad \forall t>0,\quad \forall U_0\in D(\mathcal{A}_{\square}). 	
\end{equation}	
\end{theoreme}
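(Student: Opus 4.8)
The plan is to invoke the Borichev--Tomilov characterization of polynomial stability. Since $\mathcal{A}_\square$ generates a bounded $C_0$-semigroup (it is $m$-dissipative and $0\in\rho(\mathcal{A}_\square)$, as established above), estimate \eqref{ENE-EST1} is equivalent to the two conditions
\begin{equation*}
i\mathbb{R}\subset\rho(\mathcal{A}_\square)
\qquad\text{and}\qquad
\limsup_{|\lambda|\to\infty}\ \frac{1}{|\lambda|^{2}}\,\big\|(i\lambda I-\mathcal{A}_\square)^{-1}\big\|_{\mathcal{L}(\mathcal{H}_\square)}<\infty .
\end{equation*}
Moreover $D(\mathcal{A}_\square)$ is compactly embedded in $\mathcal{H}_\square$ (elliptic regularity on the square together with Rellich's theorem), so $\mathcal{A}_\square$ has compact resolvent and its spectrum consists of isolated eigenvalues; hence $i\mathbb{R}\subset\rho(\mathcal{A}_\square)$ reduces to showing that $\mathcal{A}_\square$ has no eigenvalue on the imaginary axis.

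\emph{No imaginary eigenvalues.} Suppose $\mathcal{A}_\square U=i\lambda U$ with $\lambda\in\mathbb{R}^{*}$ (the value $\lambda=0$ being excluded since $0\in\rho(\mathcal{A}_\square)$) and $U=(v,i\lambda v,p,i\lambda p)\neq0$. Identity \eqref{REAUU} forces $d(x)|v|^{2}=0$, hence $v\equiv0$ on $\omega_d$. Expanding $v$ and $p$ in the orthonormal basis $\{e_k\}_{k\geq0}$ of $L^{2}(0,1)$ consisting of the eigenfunctions of $-\partial_{yy}$ with a Dirichlet condition at $y=0$ and a Neumann condition at $y=1$ (eigenvalues $\mu_k=(k+\tfrac12)^{2}\pi^{2}$) — the decoupling being legitimate precisely because $d$ depends only on $x$ — reduces the problem to, for each $k$, a $2\times2$ system of coupled second order ODEs in $x\in(0,1)$ for $(v_k,p_k)$ with Dirichlet data at $x=0$ and Neumann data at $x=1$. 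On $(a,b)$ one has $v_k\equiv0$; substituting this into the two ODEs yields $p_k\equiv0$ on $(a,b)$ as well, so $v_k=v_k'=p_k=p_k'=0$ at $x=b$. Since $d\equiv0$ on $(b,1)$ the system is regular there and Cauchy uniqueness forces $v_k\equiv p_k\equiv0$ on $(b,1)$; the same argument on $(0,a)$ gives $v_k\equiv p_k\equiv0$ on $(0,1)$, so $U=0$, a contradiction.

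\emph{Resolvent estimate.} This is the heart of the matter, and I would argue by contradiction: assume there are $(\lambda_n)\subset\mathbb{R}$ with $|\lambda_n|\to\infty$ and $U_n=(v_n,z_n,p_n,q_n)\in D(\mathcal{A}_\square)$ with $\|U_n\|_{\mathcal{H}_\square}=1$ and $F_n:=\lambda_n^{2}(i\lambda_n U_n-\mathcal{A}_\square U_n)\to0$ in $\mathcal{H}_\square$. Taking the real part of $\langle i\lambda_n U_n-\mathcal{A}_\square U_n,U_n\rangle_{\mathcal{H}_\square}$ and using \eqref{REAUU} gives $\int_{\Omega_1}d(x)|z_n|^{2}\,dX=O(|\lambda_n|^{-2})$, so $\sqrt{d}\,z_n\to0$ and $\lambda_n\sqrt{d}\,v_n\to0$ in $L^{2}(\Omega_1)$: the dissipation controls $v_n$ only on the strip $\omega_d$. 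I would then eliminate $z_n,q_n$, reduce to the two second order equations in $(v_n,p_n)$, expand all quantities in the basis $\{e_k\}_k$, and split the modes according to the size of $\mu_k$ versus $\lambda_n^{2}$. For the ``high'' modes, i.e. $\mu_k\geq c\,\lambda_n^{2}$ with $c$ larger than the two inverse characteristic speeds of the coupled system, the relevant one-dimensional quadratic form is positive definite with a gap of order $\mu_k$, and a direct energy estimate shows that the part of $\|U_n\|^{2}$ carried by these modes is $O(\|F_n\|^{2})\to0$. For the remaining modes I would run, uniformly in $k$ and $n$, a one-dimensional argument on $(0,1)$: a cut-off/localization step that transfers the information $\lambda_n\sqrt d\,v_n\to0$ from $\omega_d$ to the whole interval through the first equation; a piezoelectric coupling estimate that, because the coupling acts through the Laplacian, lets one control $\nabla(\gamma v_n-p_n)$ from $\nabla v_n$ via the second equation and so route the dissipation from $v_n$ to $p_n$; and a Rellich--Morawetz-type multiplier $x\,\partial_x(\cdot)$, whose boundary terms at $x=0$ and $x=1$ are handled using respectively the Dirichlet and Neumann conditions, to recover the full mode energy at the price of exactly two powers of $\lambda_n$, which is absorbed by the $\lambda_n^{-2}$ prefactor built into the Borichev--Tomilov scaling. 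Summing the mode estimates and using $\|F_n\|\to0$ then contradicts $\|U_n\|=1$.

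The step I expect to be the main obstacle is obtaining these mode-by-mode multiplier estimates \emph{uniformly} across the resonant band $\mu_k\sim\lambda_n^{2}$, where the one-dimensional operator is nearly singular: this is exactly where the estimate genuinely degrades to the $|\lambda|^{2}$ growth — hence the rate $t^{-1}$ rather than exponential — and where the single-component, locally supported damping is felt most, since one must propagate the dissipation from $v_n$ to $p_n$ across the strong (Laplacian) coupling while carefully bookkeeping the interface terms generated by the cut-off functions on $\partial\omega_d$ and the boundary (corner) terms of the Rellich multiplier.
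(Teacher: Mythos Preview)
Your overall architecture matches the paper's: Borichev--Tomilov reduction, compact resolvent, elimination of imaginary eigenvalues via the Fourier decomposition in $y$ and ODE uniqueness on $(0,1)$, and a contradiction argument for the resolvent bound starting from the dissipation estimate $\int d|z_n|^{2}=o(\lambda_n^{-2})$. These steps are carried out essentially as you describe (the paper's Proposition~\ref{First-condition-pol} and Lemma~\ref{INFO1-SQUARE}).

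Where you diverge is in the treatment of the resolvent estimate after Fourier decomposition. You propose a high/low frequency splitting in $\mu_k$ versus $\lambda_n^{2}$, disposing of the elliptic regime $\mu_k\gtrsim\lambda_n^{2}$ by coercivity and then running a one-dimensional multiplier argument on the finitely many resonant modes. The paper does \emph{not} split: it works directly with the full $\ell^{2}$-sum $\sum_j$ throughout, and the argument is a carefully ordered cascade of cut-off multipliers. First, nested cut-offs $\theta_1,\theta_2$ supported in $(a,b)$ are used to push the information $\sqrt{d}\,\lambda v_j=o(\lambda^{-1})$ to successively $\lambda p_j$, then $v_j',\xi_j v_j$, then $p_j',\xi_j p_j$, all in $L^{2}$ of a slightly smaller interval $D_{2\varepsilon}\subset(a,b)$ (Lemmas~\ref{INFO2-SQUARE}--\ref{INFO4-SQUARE}). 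Only then is a Rellich-type multiplier applied, but not $x\partial_x$: rather $h=x\,h_1+(x-1)\,h_2$ with $h_1,h_2$ a partition of unity relative to $D_{2\varepsilon}$, so that $h$ vanishes at both endpoints and $h''$ is supported inside $D_{2\varepsilon}$, where \emph{all} components are already known to be small. Two such identities (multiplying by $-2h\overline{v_j'}$, $-2h\overline{p_j'}$ and by $-h'\overline{v_j}$, $-h'\overline{p_j}$) are combined so that the sign-indefinite ``kinetic minus potential'' terms cancel, leaving only the coercive pieces $\int(h_1+h_2)(|v_j'|^{2}+|\gamma v_j'-p_j'|^{2})$ plus terms supported in $D_{2\varepsilon}$.

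Your splitting is not wrong in principle, but it buys you nothing here and makes the bookkeeping heavier: you would still need exactly the same cascade of localized estimates on the resonant modes, and the uniformity in $k$ that you flag as the main obstacle is precisely what the paper's choice of multipliers is engineered to deliver without ever isolating a band. Conversely, the paper's approach spares you the separate elliptic estimate but requires the specific two-step structure (first shrink the support via $\theta_k$, then propagate via $h$) that your outline does not yet contain; in particular the plain multiplier $x\partial_x$ you mention would produce boundary terms at $x=1$ and interior commutator terms with $d$ that are not obviously $o(\lambda^{-2})$ unless you have already localized as in Lemmas~\ref{INFO2-SQUARE}--\ref{INFO4-SQUARE}.
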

\noindent To prove this theorem, let us first introduce the following sufficient and necessary condition on the polynomial stability of a semigroup proposed by  Borichev-Tomilov in \cite{Borichev01} (see also \cite{Batty01}, \cite{RaoLiu01},
and the recent paper   \cite{ROZENDAAL2019359}).
\begin{theoreme}\label{GENN}
	{\rm
Assume that $A$ is the generator of a strongly continuous semigroup of contractions $\left(e^{t A}\right)_{t\geq 0}$ on a Hilbert space $H$. If 
\begin{equation}\label{iRrhoA}
i\R\subset \rho(A),	
\end{equation}
then for a fixed $\ell>0$ the following conditions are equivalent 
\begin{equation}\label{Condition2}
\limsup_{\la\in \R,\ |\la|\to \infty}\frac{1}{\abs{\la}^{\ell}}\|(i\la I-A)^{-1}\|_{\mathcal{L}(H)}<\infty. 
\end{equation}
\begin{equation}
\|e^{tA}X_0\|_H^2\leq \frac{C}{t^{\frac{2}{\ell}}}\|X_0\|^2_{D(A)},\ \ X_0\in D(A),\ \text{for some}\ C>0.
\end{equation}}
\end{theoreme}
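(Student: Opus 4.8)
Since the statement is precisely the Borichev--Tomilov characterisation, the plan is to prove the two implications of the equivalence following their scheme, after a purely operator-theoretic reduction. Because $0=i\cdot 0\in i\R\subset\rho(A)$, the operator $A^{-1}$ is bounded, so the graph norm obeys $\|X_0\|_{D(A)}\simeq\|AX_0\|$; writing $X_0=A^{-1}z$ with $z=AX_0$, the decay estimate in the statement is equivalent to the operator bound $\|e^{tA}A^{-1}\|_{\mathcal L(H)}\le C\,t^{-1/\ell}$ for $t$ large. Likewise \eqref{Condition2} is equivalent to $\|R(i\la,A)\|_{\mathcal L(H)}\le M(1+|\la|)^{\ell}$ for all $\la\in\R$, the bound on compact sets being automatic from \eqref{iRrhoA} and continuity of the resolvent. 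It therefore suffices to establish, for the contraction semigroup $(e^{tA})_{t\ge0}$,
\[
\|R(i\la,A)\|=O(|\la|^{\ell})\ (|\la|\to\infty)\quad\Longleftrightarrow\quad \|e^{tA}A^{-1}\|=O(t^{-1/\ell})\ (t\to\infty),
\]
where $R(i\la,A):=(i\la I-A)^{-1}$.

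For necessity ($\Leftarrow$), assume $\|e^{tA}A^{-1}\|\le C t^{-1/\ell}$. Fix $\la\in\R$, $\la\neq0$, and $x\in H$ with $\|x\|=1$, and differentiate $t\mapsto e^{-i\la t}e^{tA}R(i\la,A)x$; since $(i\la I-A)R(i\la,A)x=x$, integrating from $0$ to $T$ yields the exact identity
\[
R(i\la,A)x=e^{-i\la T}e^{TA}R(i\la,A)x+\int_0^T e^{-i\la t}e^{tA}x\,dt.
\]
Writing $e^{TA}R(i\la,A)x=e^{TA}A^{-1}\bigl(i\la R(i\la,A)x-x\bigr)$ and using the decay bound together with $\|e^{tA}\|\le1$, one gets, with $N:=\|R(i\la,A)x\|$, the inequality $N\le C T^{-1/\ell}\bigl(|\la|\,N+1\bigr)+T$. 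Choosing $T=(2C|\la|)^{\ell}$ makes the coefficient of $N$ on the right at most $1/2$, and absorbing it gives $N=O(|\la|^{\ell})$ uniformly in $x$, i.e. $\|R(i\la,A)\|=O(|\la|^{\ell})$. This direction uses only the contraction property and not the Hilbert structure.

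For sufficiency ($\Rightarrow$) -- the main implication -- assume $\|R(i\la,A)\|\le M(1+|\la|)^{\ell}$. The first step is a Neumann-series continuation: for $\mu=s+i\la$ the series $R(\mu,A)=R(i\la,A)\sum_{n\ge0}(-s)^nR(i\la,A)^n$ converges whenever $|s|<\tfrac1M(1+|\la|)^{-\ell}$, so the resolvent extends holomorphically to the region $\Omega_c=\{s+i\la:\ s>-c\,(1+|\la|)^{-\ell}\}$ for some small $c>0$, with $\|R(\mu,A)\|\le 2M(1+|\la|)^{\ell}$ there. Thus the spectrum is pushed off the imaginary axis by a distance $\sim(1+|\la|)^{-\ell}$, with a matching resolvent bound.

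Extracting from these bounds the decay $\|e^{tA}A^{-1}\|=O(t^{-1/\ell})$ with the exact exponent $1/\ell$, no logarithmic loss, and only the single power $A^{-1}$, is the heart of the theorem and the step I expect to be the main obstacle. It is here that the Hilbert-space structure is indispensable, entering through the Plancherel identity
\[
\int_{\R}\bigl\|R(\delta+i\la,A)x\bigr\|^2\,d\la=2\pi\int_0^\infty e^{-2\delta t}\bigl\|e^{tA}x\bigr\|^2\,dt,\qquad\delta>0,
\]
valid for $x\in H$ because $R(\delta+i\la,A)x$ is the Fourier transform in $\la$ of $t\mapsto e^{-\delta t}e^{tA}x$. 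I would feed the resolvent bounds on $\Omega_c$ into this identity in a Gearhart--Pr\"uss-type argument adapted to the polynomial weight $(1+|\la|)^{-\ell}$: balancing the shrinking width of the analyticity region against the polynomial growth of the resolvent is exactly what produces the sharp exponent. The same scheme in a general Banach space yields only the weaker Batty--Duyckaerts rate with a logarithmic correction, which is precisely why the argument must exploit that $H$ is a Hilbert space. Once $\|e^{tA}A^{-1}\|=O(t^{-1/\ell})$ is established, squaring and rewriting it via $\|X_0\|_{D(A)}\simeq\|AX_0\|$ returns the estimate in the form stated, and together with the necessity direction this closes the equivalence.
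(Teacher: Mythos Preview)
The paper does not prove this theorem: it is stated without proof and attributed to Borichev--Tomilov \cite{Borichev01} (with further references to \cite{Batty01}, \cite{RaoLiu01}, \cite{ROZENDAAL2019359}), and is then used as a black box to reduce Theorem~\ref{Thm-Pol} to the resolvent conditions \eqref{iRrhoA} and \eqref{Condition2}. So there is nothing on the paper's side to compare your argument against.

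Your sketch nonetheless follows the genuine Borichev--Tomilov strategy, and the reductions and the necessity direction are correct as written. For the sufficiency direction you have the right architecture (Neumann-series extension of the resolvent across the imaginary axis, then a Plancherel/Gearhart--Pr\"uss argument exploiting the Hilbert structure to kill the logarithmic loss), but be aware that this last step is where all the work lies and your outline stops short of it: the actual proof requires a careful frequency-localised decomposition and an interpolation/optimisation between the resolvent growth and the width of the analyticity strip, not merely ``feeding the bounds into Plancherel''. As a proof \emph{plan} it is sound; as a proof it is incomplete at exactly the point you flagged, and for the purposes of this paper quoting the result is the intended route.
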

\noindent According to Theorem \ref{GENN}, to prove Theorem \ref{Thm-Pol}, we need to prove that \eqref{iRrhoA} and \eqref{Condition2} hold, where $\ell=2$. For the technique, we use the orthonormal basis decomposition.  To this aim, let $e_j(y)=\sqrt{2}\sin(\xi_jy)$, where $\xi_j=\frac{(2j+1)\pi}{2}$, $j\in \mathbb{N}^{\ast}$ and $y\in (0,1)$. We may expand $v$ into a series of the form 
\begin{equation}\label{seriesv}
v(X)=\sum_{j=1}^{\infty} v_j(x)e_j(y),\quad (x,y)\in \Omega_1. 	
\end{equation}
Similarly, $z$, $p$ and $q$ can be expanded into a series of the same form as that in \eqref{seriesv} with, respectively, the coefficients $v_j(x)$, $z_j(x)$, $p_j(x)$ and $q_j(x)$. The  energy  Hilbert space \eqref{Hilbert} is given by 
\begin{equation}\label{NewH}
\mathcal{H}_{\square}=(\widehat{W}_1\times L^2(0,1))^2
\end{equation}
where 
$$
\widehat{W}_1=\left\{f\in H(0,1);\quad f(0)=0\right\}
$$
equipped with the following norm 
$$
\|U\|_{\widehat{\mathcal{H}_{\square}}}^2=\sum_{j=1}^{\infty}\left(\alpha_1(\|v_j'\|^2+\xi_j^2\|v_j\|^2)+\rho\|z_j\|^2+\beta\left(\|\gamma v_j'-p_j'\|^2+\xi_j^2\|\gamma v_j-p_j\|^2\right)+\mu \|q_j\|^2\right),
$$
where $\|\cdot\|:=\|\cdot\|_{L^2(0,1)}$ and (for later) $\|\cdot\|_{\infty}:=\|\cdot\|_{L^{\infty}(0,1)}$. This gives rise to the functions 
\begin{equation}\label{expansion-series}
(v_j,z_j,p_j,q_j)\in \left(\left(H^2(0,1)\cap \widehat{W}_1 \right)\times \widehat{W}_1 \right)^2\quad \text{and}\quad v_j'(1)=p_j'(1)=0,
\end{equation}
where " $'$ " represents the derivative with respect to $x$. The operator $\mathcal{A}_{\square}$ defined in \eqref{Operator1} can be written as 
\begin{equation}\label{Operator11}
\mathcal{A}_{\square}\begin{pmatrix}
v_j\\ z_j\\ p_j\\ q_j  	
\end{pmatrix}=\begin{pmatrix}
z_j\\ \frac{1}{\rho}\left(\alpha( v_j^{''}-\xi_j^2v_j)-\gamma \beta (p_j^{''}-\xi_j^2p_j)-d z_j\right)\\  q_j\\ \frac{1}{\mu}\left(\beta (p_j^{''}-\xi_j^2p_j)-\gamma\beta(v_j^{''}-\xi_j^2v_j) \right).
\end{pmatrix}.
\end{equation}
\begin{pro}\label{First-condition-pol}
$i\mathbb{R}\subset \rho(\mathcal{A}_{\square})$. 	
\end{pro}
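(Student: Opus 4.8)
The plan is to use that $0\in\rho(\mathcal A_\square)$ together with the fact that $\mathcal A_\square$ has compact resolvent — this follows from the compactness of the Sobolev embeddings on the bounded domain $\Omega_1$ (Rellich for the $z,q$--components, and elliptic regularity on the square together with Rellich for the $v,p$--components) — so that $\sigma(\mathcal A_\square)$ consists only of isolated eigenvalues of finite multiplicity. Hence it is enough to check that $i\lambda$ is not an eigenvalue of $\mathcal A_\square$ for any real $\lambda\neq 0$. I would argue by contradiction: assume that $U=(v,z,p,q)\in D(\mathcal A_\square)\setminus\{0\}$ satisfies $\mathcal A_\square U=i\lambda U$ for some $\lambda\in\mathbb R^{\ast}$. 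Taking the real part of $\langle\mathcal A_\square U,U\rangle_{\mathcal H_\square}=i\lambda\|U\|_{\mathcal H_\square}^2$ and invoking the dissipation identity \eqref{REAUU} gives $\int_{\Omega_1}d(x)|z|^2\,dX=0$; since $d\ge d_0>0$ on $(a,b)$, this forces $z\equiv 0$ in $(a,b)\times(0,1)$, and then $v=\tfrac{1}{i\lambda}z\equiv 0$ there as well.

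Next I would pass to the orthonormal-basis decomposition \eqref{seriesv}: writing $v=\sum_j v_j(x)e_j(y)$, $z=\sum_j z_j(x)e_j(y)$, $p=\sum_j p_j(x)e_j(y)$, $q=\sum_j q_j(x)e_j(y)$, the vanishing of $v$ and $z$ on the strip $(a,b)\times(0,1)$ means $v_j\equiv z_j\equiv 0$ on $(a,b)$ for every $j$ (the $z_j$ are continuous). Inserting $v_j\equiv 0$ (hence $v_j''\equiv 0$) and $z_j\equiv 0$ on $(a,b)$ into the two scalar equations coming from $\mathcal A_\square U=i\lambda U$ through \eqref{Operator11}, the first one gives $p_j''-\xi_j^2 p_j=0$ on $(a,b)$, and substituting this into the second one gives $\mu\lambda^2 p_j\equiv 0$, i.e.\ $p_j\equiv 0$ (and $q_j=i\lambda p_j\equiv 0$) on $(a,b)$. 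So $v_j,z_j,p_j,q_j$ all vanish on $(a,b)$.

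The crucial remaining step is a unique-continuation argument, which is elementary here because the damping strip $\omega_d$ crosses the whole rectangle: on each of the intervals $(0,a)$ and $(b,1)$ one has $d\equiv 0$, so by \eqref{Operator11} the pair $(v_j,p_j)$ solves the homogeneous constant-coefficient system
\begin{equation*}
\begin{pmatrix}\alpha & -\gamma\beta\\[2pt] -\gamma\beta & \beta\end{pmatrix}
\begin{pmatrix}v_j''\\[2pt] p_j''\end{pmatrix}
=\begin{pmatrix}(\alpha\xi_j^2-\rho\lambda^2)\,v_j-\gamma\beta\xi_j^2\,p_j\\[2pt] (\beta\xi_j^2-\mu\lambda^2)\,p_j-\gamma\beta\xi_j^2\,v_j\end{pmatrix},
\end{equation*}
whose leading matrix is invertible thanks to $\det=\beta(\alpha-\gamma^2\beta)>0$. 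By \eqref{expansion-series} we have $v_j,p_j\in H^2(0,1)\hookrightarrow C^1([0,1])$, so the Cauchy data $\big(v_j,v_j',p_j,p_j'\big)$ are well defined at $x=a$ and at $x=b$ and vanish there (they vanish on all of $(a,b)$). Rewriting the system as a first-order linear ODE $\mathbf Y'=M_j\mathbf Y$ with zero initial condition at $x=a$ (resp.\ at $x=b$), the Cauchy--Lipschitz uniqueness theorem yields $v_j\equiv p_j\equiv 0$ on $(0,a)$ and on $(b,1)$, hence on all of $(0,1)$. Summing over $j$ gives $v\equiv p\equiv 0$ on $\Omega_1$, therefore $z=i\lambda v\equiv 0$ and $q=i\lambda p\equiv 0$, i.e.\ $U\equiv 0$, a contradiction. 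Combined with $0\in\rho(\mathcal A_\square)$, this proves $i\mathbb R\subset\rho(\mathcal A_\square)$.

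I expect the only genuine difficulty to be this unique-continuation step; the strip geometry together with the modal reduction makes it essentially trivial (a family of linear constant-coefficient ODEs with vanishing Cauchy data), whereas for a damping region not crossing the rectangle one would need a real Carleman/Holmgren argument. The two minor points requiring care are the $H^2$-regularity of the modes $v_j,p_j$ used to transfer Cauchy data across the interfaces $x=a,b$, and the compactness of $\mathcal A_\square^{-1}$ (a routine elliptic-regularity estimate on the square), which is what allows one to pass from "no purely imaginary eigenvalues" to $i\mathbb R\subset\rho(\mathcal A_\square)$.
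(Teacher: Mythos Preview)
Your proof is correct and follows essentially the same route as the paper's: compact resolvent together with $0\in\rho(\mathcal A_\square)$ reduces the question to ruling out purely imaginary eigenvalues, dissipation forces $v_j,z_j$ (and then $p_j,q_j$) to vanish on $(a,b)$, and a first-order linear ODE argument with vanishing Cauchy data at $x=a,b$ propagates this to all of $(0,1)$. The only cosmetic difference is that the paper first combines the two second-order equations via $\alpha=\alpha_1+\gamma^2\beta$ into the form \eqref{KER11}--\eqref{KER12} before writing the $4\times4$ system $\widetilde U_x=B\widetilde U$, whereas you invert the leading $2\times2$ coefficient matrix directly; the two formulations are equivalent.
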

\begin{proof}
To prove $i\mathbb{R}\subset \rho(\mathcal{A}_{\square})$ it is sufficient to prove that $\sigma(\mathcal{A}_{\square})\cap i\mathbb{R}=\emptyset$. Since the resolvent of $\mathcal{A}_{\square}$ is compact in $\mathcal{H}_{\square}$ then $\sigma(\mathcal{A}_{\square})=\sigma_p(\mathcal{A}_{\square})$. In the previous section,  we already proved that  $0\in \rho(\mathcal{A}_{\square})$. It remains to show that $\sigma(\mathcal{A}_{\square})\cap i\mathbb{R}^{\ast}=\emptyset$. For this aim, suppose by contradiction that there exists a real number $\lambda\neq 0$ and $U=(v,z,p,q)^{\top}\in D(\mathcal{A}_{\square})\backslash \{0\}$ such that 
\begin{equation}\label{KER1}
\mathcal{A}_{\square}U=i\la U.	
\end{equation}
Using the orthonormal basis decomposition, \eqref{Operator11} and detailing \eqref{KER1}, we get the following system 
\begin{eqnarray}
z_j&=&i\la v_j,\label{KER2}\\
i\la \rho z_j-\alpha( v_j^{''}-\xi_j^2v_j)+\gamma \beta (p_j^{''}-\xi_j^2p_j)+d z_j&=&0,\label{KER3}\\
q_j&=&i\la p_j,\label{KER4}\\
i\la \mu q_j-\beta (p_j^{''}-\xi_j^2p_j)+\gamma\beta(v_j^{''}-\xi_j^2v_j)&=&0.\label{KER5}
\end{eqnarray}
From \eqref{REAUU} and \eqref{KER1}, we have 
\begin{equation}\label{KER6}
0=\Re\left(i\la \|U\|_{\mathcal{H}_{\square}}\right)=\Re\left(\left<\mathcal{A}_{\square}U,U\right>_{\mathcal{H}_{\square}}\right)=-\int_{\Omega_1}d(x)\abs{z(X)}^2dX=-\int_0^1d(x)\abs{z_j(x)}^2dx\leq 0. 	
\end{equation}
Thus, from \eqref{KER2}, \eqref{KER6} and the fact that $\la \neq 0$, we have 
\begin{equation}\label{KER7}
d(x)z_j(x)=0\ \text{in}\ (0,1)\quad \text{and consequently}\ z_j=v_j=0\ \text{in}\ (a,b). 
\end{equation}
Using the fact that $\alpha=\alpha_1+\gamma^2\beta$ and \eqref{KER7} in \eqref{KER3}, we get 
\begin{equation}\label{KER8}
i\la\rho z_j-\alpha_1(v_j^{''}-\xi_j^2v_j)-\gamma\left(\gamma\beta( v_j^{''}-\xi_j^2v_j)-\beta (p_j^{''}-\xi_j^2p_j)\right)=0,\quad \text{in}\quad (0,1).
\end{equation}
Combining \eqref{KER8} and \eqref{KER5}, we get 
\begin{equation}\label{KER9}
i\la \left(\rho z_j+\gamma\mu q_j\right)-\alpha_1\left(v_j^{''}-\xi_j^2v_j\right)=0,\quad \text{in}\quad (0,1).	
\end{equation}
Using \eqref{KER7} in \eqref{KER9} and the fact that $\la\neq 0$,  then using \eqref{KER4}, we get 
\begin{equation}
q_j=p_j=0\quad \text{in}\quad (a,b). 	
\end{equation}
Since $v_j,p_j\in H^2(a,b)\subset C^1([a,b])$, we get 
\begin{equation}\label{KER10}
v_j(\xi)=v_j'(\xi)=p_j(\xi)=p_j'(\xi)=0\quad \text{where}\quad \xi\in \{a,b\}.	
\end{equation}
Inserting \eqref{KER2} and \eqref{KER4} in \eqref{KER9}, then combining with \eqref{KER5},  we get the following system
\begin{eqnarray}
v_j^{''}-\xi_j^2v_j&=&-\frac{\la^2}{\alpha_1}\left(\rho v_j+\gamma\mu p_j\right),\quad \text{in}\quad (0,1)\label{KER11}\\
p_j^{''}-\xi_j^2p_j&=&-\frac{\la^2}{\alpha_1}\left(\gamma\rho v_j+\mu\frac{\alpha}{\beta}p_j\right),\quad \text{in}\quad (0,1)\label{KER12}.	
\end{eqnarray}
Let $\widetilde{U}=(v_j,v_j',p_j,p_j')^{\top}$. From \eqref{KER10}, we get $\widetilde{U}(b)=0$. Now, system \eqref{KER11}-\eqref{KER12} can be written in $(b,L)$ as the following 
\begin{equation}\label{Ker13}
\widetilde{U}_x=B\widetilde{U}\quad \text{in}\quad (b,L),	
\end{equation}
where 
$$
B=\begin{pmatrix}
0&1&0&0\\
\frac{\alpha_1\xi_j^2-\rho\la^2}{\alpha_1}&0&-\frac{\gamma\mu\la^2}{\alpha_1}&0\\
0&0&0&1\\
-\frac{\la^2\rho\gamma}{\alpha_1}&0&\frac{\alpha_1\beta \xi_j^2-\la^2\mu\alpha}{\alpha_1\beta}&0
\end{pmatrix}.
$$
The solution of the differential equation \eqref{Ker13} is given by 
\begin{equation}\label{Ker14}
\widetilde{U}(x)=e^{B(x-b)}\widetilde{U}(b)=0\quad \text{in}\quad (b,L).	
\end{equation}
In the same way, we prove that $\widetilde{U}=0$ in $(0,a)$. Consequently, we get $v_j=p_j=0$ in $(0,L)$ therefore $U=0$ in $(0,L)$. The proof is thus complete.   
\end{proof}

$\newline$
\noindent As condition \eqref{iRrhoA} is already proved in Proposition \ref{First-condition-pol}, we only need to prove condition \eqref{Condition2}. Here, we use a contradiction argument. Namely, suppose that \eqref{Condition2} is false, then there exists
$$
\left\{(\la_n,U^{(n)}:=(v^{(n)},z^{(n)},p^{(n)},q^{(n)}))\right\}_{n\geq 1}\subset \mathbb{R}^{\ast}_+\times D(\mathcal{A}_{\square}),
$$
with 
\begin{equation}\label{Cond-Pol1}
\la_n\rightarrow \infty\ \text{as}\ n\to \infty\ \text{and}\ \|U^{(n)}\|_{\mathcal{H}_{\square}}=\|(v^{(n)},z^{(n)},p^{(n)},q^{(n)})\|_{\mathcal{H}_{\square}}=1,\quad \forall n\in \mathbb{N},
\end{equation}
such that 
\begin{equation}\label{Cond-Pol2}
\la_n^{\ell}(i\la_n-\mathcal{A})U^{(n)}=F^{(n)}:=(f^{1,(n)},f^{2,(n)},f^{3,(n)},f^{4,(n)})\to 0\quad \text{in}\quad \mathcal{H}_{\square},\ \text{as} \ n\to \infty. 	
\end{equation}
For simplicity, we drop the index $n$. Detailing equation \eqref{Cond-Pol2}, we get 
\begin{equation}\label{1DETAIL}
\left\{\begin{array}{rll}
i\la v-z&=&\displaystyle \la^{-2}f^1,\\[0.1in]
i\la \rho z-\alpha \Delta v+\gamma\beta\Delta p+d\, z&=&\displaystyle \la^{-2}\rho f^2,\\[0.1in]
i\la p-q&=&\displaystyle \la^{-2}f^3,\\[0.1in]
i\la \mu q-\beta \Delta p+\gamma\beta \Delta v&=&\displaystyle \la^{-2}\mu f^4.\\[0.1in]
\end{array}
\right.	
\end{equation}
Using the orthonormal basis decomposition, system \eqref{1DETAIL} turns into the system of one-dimensional equations 
\begin{eqnarray}
i\la v_j-z_j&=&\frac{f^1_j}{\la^2},\label{DETAIL1}\\
i\la \rho z_j-\alpha(v_j^{''}-\xi_j^2v_j)+\gamma\beta(p_j^{''}-\xi_j^2p_j)+d\, z_j&=&\frac{\rho f^2_j}{\la^2},\label{DETAIL2}\\
i\la p_j-q_j&=&\frac{f^3_j}{\la^2},\label{DETAIL3}\\
i\la \mu q_j-\beta (p_j^{''}-\xi_j^2p_j)+\gamma\beta (v_j^{''}-\xi_j^2v_j)&=&\frac{\mu f^4_j}{\la^2},\label{DETAIL4}.	
\end{eqnarray}
Inserting \eqref{DETAIL1} and \eqref{DETAIL3} in \eqref{DETAIL2} and \eqref{DETAIL4}, we get the following system 
\begin{eqnarray}
\left(\la^2\rho-\alpha\xi_j^2\right)v_j+\alpha v_j^{''}-\gamma\beta\left(p_j^{''}-\xi_j^2p_j\right)-i\la d v_j&=&F^1_j,\label{COMB1-DETAIL}\\
\left(\la^2\mu-\beta\xi_j^2\right)p_j+\beta p_j^{''}-\gamma\beta\left(v_j^{''}-\xi_j^2v_j\right)&=&F^2_j,\label{COMB2-DETAIL}
\end{eqnarray}
where 
\begin{equation}\label{F1F2}
F_j^1:=-\left(\frac{\rho f_j^2}{\la^2}+\frac{i\rho f_j^1}{\la}+\frac{d f^1_j}{\la^2}\right)\quad \text{and}\quad 	F_j^2:=-\left(\frac{\mu f_j^4}{\la^2}+\frac{i\mu f_j^3}{\la}\right).
\end{equation}
Using the fact that $\alpha=\alpha_1+\gamma^2\beta$ in \eqref{COMB1-DETAIL}, we get 
\begin{equation}\label{COMB3-DETAIL}
\left(\la^2\rho-\alpha_1\xi_j^2\right)v_j+\alpha_1v_j^{''}+\gamma\left(\gamma\beta(v_j^{''}-\xi_j^2v_j)-\beta(p_j^{''}-\xi_j^2p_j)\right)-i\la dv_j=F_j^1.	
\end{equation}
Now, combining \eqref{COMB2-DETAIL} and \eqref{COMB3-DETAIL}, we get 
\begin{equation}\label{COMB4-DETAIL}
\alpha_1\left(v_j^{''}-\xi_j^2v_j\right)=-\la^2\rho v_j-\gamma\la^2\mu p_j+i\la d v_j+F_j^1+\gamma F_j^2.	
\end{equation}
Inserting \eqref{COMB4-DETAIL} in \eqref{COMB2-DETAIL}, we get the following system
\begin{eqnarray}
\left(\la^2\rho-\alpha_1\xi_j^2\right)v_j+\alpha_1v_j^{''}+\gamma\mu\la^2p_j-i\la d v_j&=&F_j^3,\label{COMB5-DETAIL}\\
\left(\la^2\mu\alpha-\alpha_1\beta\xi_j^2\right)p_j+\alpha_1\beta p_j^{''}+\rho\gamma\beta\la^2v_j-i\la \gamma\beta d v_j&=&F_j^4,\label{COMB6-DETAIL}\\
v_j(0)=p_j(0)=v_j'(1)=p_j'(1)&=&0\label{COMB7-DETAIL},
\end{eqnarray}
where 
\begin{equation}\label{F3F4}
F_j^3=F_j^1+\gamma F_j^2\quad \text{and}\quad F_j^4=\alpha F_j^2+\gamma \beta F_j^1.	
\end{equation}
Before going on, let us first give the consequence of the dissipativeness property of the solution $\left(v_j,z_j,p_j,q_j\right)$ of the system \eqref{DETAIL1}-\eqref{DETAIL4}.
\begin{lemma}\label{INFO1-SQUARE}
The solution $(v,z,p,q)$ of system \eqref{1DETAIL} satisfies the following estimations 
\begin{equation}\label{EQ1-INFO1-SQUARE}
\sum_{j=1}^{\infty}\|\sqrt{d}z_j\|^2=o(\la^{-2})\quad \text{and}\quad \sum_{j=1}^{\infty}\|\la \sqrt{d}v_j\|^2=o(\la^{-2}). 	
\end{equation}
\end{lemma}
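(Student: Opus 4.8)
The plan is to read off the estimates in \eqref{EQ1-INFO1-SQUARE} directly from the dissipation identity \eqref{REAUU}, applied to the semigroup with the source term \eqref{Cond-Pol2}. First I would take the inner product of \eqref{Cond-Pol2} with $U$ in $\mathcal{H}_{\square}$, namely compute $\langle \la^{\ell}(i\la - \mathcal{A}_{\square})U, U\rangle_{\mathcal{H}_{\square}} = \langle F, U\rangle_{\mathcal{H}_{\square}}$, and take real parts. Since $\Re(\langle i\la U, U\rangle) = \la \Im(\|U\|^2) = 0$ — wait, more precisely $\Re(i\la\|U\|^2_{\mathcal{H}_{\square}}) = 0$ because $\|U\|^2$ is real — and $-\Re\langle\mathcal{A}_{\square}U,U\rangle_{\mathcal{H}_{\square}} = \int_{\Omega_1} d(x)|z(X)|^2\,dX$ by \eqref{REAUU}, we obtain
\[
\la^{\ell}\int_{\Omega_1} d(x)\,|z(X)|^2\,dX = \Re\langle F, U\rangle_{\mathcal{H}_{\square}}.
\]
Then by Cauchy–Schwarz $|\Re\langle F,U\rangle_{\mathcal{H}_{\square}}| \le \|F\|_{\mathcal{H}_{\square}}\|U\|_{\mathcal{H}_{\square}} = \|F\|_{\mathcal{H}_{\square}} \to 0$ by \eqref{Cond-Pol1}–\eqref{Cond-Pol2}. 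Hence, recalling $\ell = 2$,
\[
\int_{\Omega_1} d(x)\,|z(X)|^2\,dX = \frac{\Re\langle F,U\rangle_{\mathcal{H}_{\square}}}{\la^{2}} = o(\la^{-2}).
\]

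Next I would transfer this from the $X = (x,y)$ form to the one-dimensional coefficients. Since $d$ depends only on $x$ and the $e_j(y)$ are orthonormal in $L^2(0,1)$, Parseval gives $\int_{\Omega_1} d(x)|z(X)|^2\,dX = \sum_{j=1}^{\infty}\int_0^1 d(x)|z_j(x)|^2\,dx = \sum_{j=1}^{\infty}\|\sqrt{d}\,z_j\|^2$, which yields the first estimate $\sum_{j=1}^{\infty}\|\sqrt{d}\,z_j\|^2 = o(\la^{-2})$. For the second estimate, I would use the first line of the detailed system \eqref{DETAIL1}, i.e. $z_j = i\la v_j - \la^{-2}f_j^1$, so that $\sqrt{d}\,z_j = i\la\sqrt{d}\,v_j - \la^{-2}\sqrt{d}\,f_j^1$, whence $\la\sqrt{d}\,v_j = -i\sqrt{d}\,z_j - i\la^{-2}\sqrt{d}\,f_j^1$. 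Taking $L^2(0,1)$-norms, summing over $j$, using $\|\sqrt d\|_\infty \le \|d\|_\infty^{1/2} < \infty$, and the fact that $\sum_j\|f_j^1\|^2 \le C\|F\|^2_{\mathcal{H}_{\square}} \to 0$ together with the triangle inequality, gives $\sum_{j=1}^{\infty}\|\la\sqrt{d}\,v_j\|^2 \lesssim \sum_{j=1}^{\infty}\|\sqrt d\,z_j\|^2 + \la^{-4}\|d\|_\infty\sum_{j=1}^\infty\|f_j^1\|^2 = o(\la^{-2})$.

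There is no real obstacle here — this is the standard first step extracting information from dissipativeness — but I would be slightly careful about two bookkeeping points. The first is the power of $\la$: because the right-hand side of \eqref{Cond-Pol2} carries the extra factor $\la^{\ell} = \la^2$ compared with the usual resolvent estimate, the dissipation integral is controlled by $\la^{-2}\|F\|_{\mathcal{H}_{\square}}$ rather than $\|F\|_{\mathcal{H}_{\square}}$, which is precisely what produces the $o(\la^{-2})$ rate rather than merely $o(1)$; I would state this explicitly. The second is the passage from the $2$D integral to the sum of $1$D norms, which relies on $d = d(x)$ and on the completeness of $\{e_j\}$ in $L^2(0,1)$ with the correct boundary conditions; I would invoke \eqref{seriesv} and the orthonormality of the $e_j$ to justify interchanging sum and integral (everything is nonnegative, so monotone convergence applies). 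With these two remarks the proof is complete.
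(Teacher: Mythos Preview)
Your proposal is correct and follows essentially the same route as the paper: take the real part of the inner product of \eqref{Cond-Pol2} with $U$ to extract $\|\sqrt{d}\,z\|^2_{L^2(\Omega_1)}=\la^{-2}\Re\langle F,U\rangle_{\mathcal{H}_\square}=o(\la^{-2})$, pass to the Fourier coefficients via orthonormality of $\{e_j\}$, and then multiply \eqref{DETAIL1} by $\sqrt{d}$ to deduce the estimate on $\la\sqrt{d}\,v_j$. Your extra bookkeeping remarks (the explicit role of the factor $\la^{\ell}$ and the Parseval step) are correct and make the argument slightly more detailed than the paper's version, but there is no substantive difference.
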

\begin{proof}
First, taking the inner product of \eqref{Cond-Pol2} with $U$ in $\mathcal{H}$, using the fact that $\|U\|_{\mathcal{H}}=1$ and $\|F\|_{\mathcal{H}}=o(1)$, we get 
\begin{equation}\label{EQ2-INFO1-SQUARE}
\|\sqrt{d}z\|^2_{L^2(\Omega_1)}=-\Re\left(\left<\mathcal{A}_{\square}U,U\right>_{\mathcal{H}_{\square}}\right)	=\Re\left(\left<(i\la I-\mathcal{A}_{\square})U,U\right>_{\mathcal{H}_{\square}}\right)=\la^{-2}\Re\left(\left<F,U\right>_{\mathcal{H}_{\square}}\right)=o(\la^{-2}). 
\end{equation}
 Thus, by the orthonormal basis decomposition, we get the first estimation in \eqref{EQ1-INFO1-SQUARE}. Now, multiplying \eqref{DETAIL1} by $\sqrt{d}$ and using the first estimation in \eqref{EQ1-INFO1-SQUARE} and that $\|F\|_{\mathcal{H}}=o(1)$, we get the second estimation in \eqref{EQ1-INFO1-SQUARE}. The proof has been completed.  	
\end{proof}

$\newline$
\noindent For all $0<\varepsilon<\frac{b-a}{4}$, we fix the following cut-off functions 
\begin{enumerate}
\item[$\bullet$] $\theta_k\in C^2([0,1])$, $k\in \left\{1,2\right\}$ such that $0\leq \theta_k(x)\leq 1$, for all $x\in [0,1]$ and 
$$
\theta_k(x)=\left\{\begin{array}{lll}
1&\text{if}&x\in [a+k\varepsilon,b-k\varepsilon],\\
0&\text{if}&x\in [0,a+(k-1)\varepsilon]\cup [b+(1-k)\varepsilon,1]. 	
\end{array}
\right.
$$
\end{enumerate}

\begin{figure}[h]
\begin{center}
\begin{tikzpicture}
\draw[->](0,0)--(12,0);
\draw[->](0,0)--(0,4);
\node[black,below] at (0,0){\scalebox{0.75}{$0$}};
\node at (0,0) [circle, scale=0.3, draw=black!80,fill=black!80] {};
\node[black,below] at (2,0){\scalebox{0.75}{$a$}};
\node at (2,0) [circle, scale=0.3, draw=black!80,fill=black!80] {};	
\node[black,below] at (3,0){\scalebox{0.75}{$a+\varepsilon$}};
\node at (3,0) [circle, scale=0.3, draw=black!80,fill=black!80] {};
\node[black,below] at (4,0){\scalebox{0.75}{$a+2\varepsilon$}};
\node at (4,0) [circle, scale=0.3, draw=black!80,fill=black!80] {};
\node[black,below] at (7,0){\scalebox{0.75}{$b-2\varepsilon$}};
\node at (7,0) [circle, scale=0.3, draw=black!80,fill=black!80] {};
\node[black,below] at (8,0){\scalebox{0.75}{$b-\varepsilon$}};
\node at (8,0) [circle, scale=0.3, draw=black!80,fill=black!80] {};
\node[black,below] at (9,0){\scalebox{0.75}{$b$}};
\node at (9,0) [circle, scale=0.3, draw=black!80,fill=black!80] {};
\node[black,below] at (10,0){\scalebox{0.75}{$1$}};
\node at (10,0) [circle, scale=0.3, draw=black!80,fill=black!80] {};
\node[black,left] at (0,2){\scalebox{0.75}{$1$}};
\node at (0,2) [circle, scale=0.3, draw=black!80,fill=black!80] {};
\node[black,left] at (0,1){\scalebox{0.75}{$d_0$}};
\node at (0,1) [circle, scale=0.3, draw=black!80,fill=black!80] {};
\draw[-,red](0,0.011)--(2,0.011);
\draw[red] (2,0.011) to[out=0.40,in=180] (3,2.011) ;
\draw[-,red](3,2.011)--(8,2.011);
\draw[red] (8,2.011) to[out=0.40,in=180] (9,0.011) ;
\draw[-,red](9,0.011)--(10,0.011);
\draw[-,blue](0,0.001)--(3,0);
\draw[-,brown](0,0.01)--(2,0);
\draw[-,blue] (3,0) to[out=0.40,in=180] (4,2) ;
\draw[-,blue](4,2)--(7,2);
\draw [blue] (7,2) to[out=0.40,in=180] (8,0) ;
\draw[brown] (2,3) to [out=0.40, in=180] (5,1);
\draw[brown] (5,1) to [out=0.40, in=180] (9,3.5);
\draw[-,blue](8,0)--(10,0);
\draw[dashed] (5.1,0.99)--(0,1);
\draw[dashed] (3,2)--(3,0);
\draw[dashed] (4,2)--(4,0);
\draw[dashed] (7,0)--(7,2);
\draw[dashed] (8,0)--(8,2);
\draw[dashed] (2,3)--(2,0);
\draw[dashed] (9,3.5)--(9,0);
\node[black,right] at (12.5,4.5){\scalebox{0.75}{$\theta_1$}};
\node[black,right] at (12.5,4){\scalebox{0.75}{$\theta_2$}};
\node[black,right] at (12.5,3.5){\scalebox{0.75}{$d$}};
\draw[-,red](12,4.5)--(12.5,4.5);
\draw[-,blue](12,4)--(12.5,4);
\draw[-,brown](12,3.5)--(12.5,3.5); 
\end{tikzpicture}
\caption{Geometric description of the functions $\theta_1$, $\theta_2$ and $d$.}\label{p2-Fig2}
\end{center}	
\end{figure}
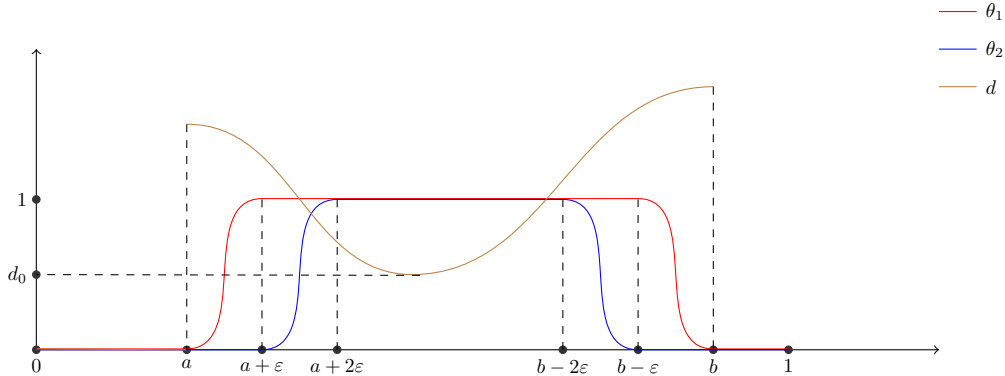

\begin{lemma}\label{INFO2-SQUARE}
The solution $(v,z,p,q)$ of system \eqref{1DETAIL} satisfies the following estimations
\begin{equation}\label{EQ1-INFO2-SQUARE}
\sum_{j=1}^{\infty}\|\la p_j\|^2_{L^2(D_{\varepsilon})}=o(\la^{-2})\quad \text{and}\quad \sum_{j=1}^{\infty}\|q_j\|^2_{L^2(D_{\varepsilon})}=o(\la^{-2}), 	
\end{equation}
where $D_{\varepsilon}:=(a+\varepsilon,b-\varepsilon)$ with a positive real number $\varepsilon$ small enough such that $\varepsilon<\frac{b-a}{4}$. 	
\end{lemma}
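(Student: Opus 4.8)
The plan is to test the elasticity equation \eqref{COMB5-DETAIL} against $\theta_1^2\overline{p_j}$ and to use the charge equation \eqref{COMB6-DETAIL} to remove the second $x$-derivative of $v_j$, so that the smallness of $v_j$ on $(a,b)$ produced by the damping is transferred to $\la p_j$. I will use the following facts (all $L^2$-norms over $(0,1)$), which are consequences of $\|U^{(n)}\|_{\mathcal{H}_{\square}}=1$, of \eqref{DETAIL3}, and of \eqref{F1F2}--\eqref{F3F4}: $\sum_j\|p_j'\|^2=O(1)$, $\sum_j\|\la p_j\|^2=O(1)$ hence $\sum_j\|p_j\|^2=O(\la^{-2})$, and $\sum_j\|F_j^k\|^2=o(\la^{-2})$ for $k=1,\dots,4$. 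Since $d\geq d_0>0$ on $(a,b)$, Lemma~\ref{INFO1-SQUARE} gives in addition $\sum_j\|\la v_j\|^2_{L^2(a,b)}=o(\la^{-2})$, and therefore $\sum_j\|v_j\|^2_{L^2(a,b)}=o(\la^{-4})$; these are the only properties of the solution I shall invoke.

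I would then multiply \eqref{COMB5-DETAIL} by $\theta_1^2\overline{p_j}$, integrate over $(0,1)$, and — since $\theta_1$ and its first two derivatives vanish near $0$ and near $1$ — integrate by parts \emph{twice} in the term $\alpha_1\int\theta_1^2 v_j''\overline{p_j}$ so as to move both derivatives off $v_j$; then substitute $\alpha_1\beta p_j''=F_j^4-(\la^2\mu\alpha-\alpha_1\beta\xi_j^2)p_j-\rho\gamma\beta\la^2 v_j+i\la\gamma\beta d\,v_j$ from \eqref{COMB6-DETAIL}. The reason for this is that the $\xi_j^2 v_j\overline{p_j}$ term thereby created cancels \emph{exactly} the $\xi_j^2$-part of the $(\la^2\rho-\alpha_1\xi_j^2)v_j$ term of \eqref{COMB5-DETAIL}; after the cancellation neither $\xi_j$ nor any derivative of $v_j$ survives, and one is left with an identity of the form
\[
\gamma\mu\la^2\int_0^1\theta_1^2|p_j|^2\,dx=\Big(\tfrac{\mu\alpha}{\beta}-\rho\Big)\la^2\int_0^1\theta_1^2 v_j\,\overline{p_j}\,dx+R_j,
\]
in which $R_j$ is a finite sum of integrals over $\supp\theta_1\subset(a,b)$, each pairing one factor from $\{v_j,\ \la v_j,\ F_j^3,\ F_j^4\}$ with one factor from $\{p_j,\ p_j',\ v_j,\ \la v_j,\ F_j^4\}$; in particular no term of $R_j$ carries the product $(\la v_j)(\la p_j)$.

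Summing over $j$ and using Cauchy--Schwarz in $j$ with the bounds above, every summand of $R_j$ is $o(\la^{-2})$ (the borderline ones being those pairing $\la v_j$ or $v_j$ with $p_j$ or $p_j'$, e.g. $\sum_j\big|\int(\theta_1^2)'v_j\overline{p_j'}\big|\leq\|(\theta_1^2)'\|_\infty(\sum_j\|v_j\|^2_{L^2(a,b)})^{1/2}(\sum_j\|p_j'\|^2)^{1/2}=o(\la^{-2})$), so $\sum_j|R_j|=o(\la^{-2})$. The leftover term is handled by Young's inequality \emph{with the weight $\theta_1^2$ kept inside}: for fixed $\eta\in(0,\gamma\mu)$ its absolute value is $\leq\eta\int_0^1\theta_1^2\la^2|p_j|^2\,dx+C_\eta\int_0^1\theta_1^2\la^2|v_j|^2\,dx$, and $\sum_j\int_0^1\theta_1^2\la^2|v_j|^2\leq\sum_j\|\la v_j\|^2_{L^2(a,b)}=o(\la^{-2})$; absorbing the $\eta$-term into the left-hand side gives $\sum_j\la^2\int_0^1\theta_1^2|p_j|^2\,dx=o(\la^{-2})$. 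As $\theta_1\equiv1$ on $D_\varepsilon$, this is the first estimate of \eqref{EQ1-INFO2-SQUARE}; the second is then immediate from $q_j=i\la p_j-\la^{-2}f_j^3$ (see \eqref{DETAIL3}), since $\sum_j\|q_j\|^2_{L^2(D_\varepsilon)}\leq2\sum_j\|\la p_j\|^2_{L^2(D_\varepsilon)}+2\la^{-4}\sum_j\|f_j^3\|^2=o(\la^{-2})$.

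The one genuinely delicate point is this cancellation. Testing \eqref{COMB5-DETAIL} naively against $\theta_1^2\overline{p_j}$ with only a single integration by parts leaves the cross terms $\xi_j^2 v_j\overline{p_j}$ and $v_j'\overline{p_j'}$, in which $\xi_j p_j$ and $p_j'$ are merely $O(1)$ in $\ell^2_j$ (they are part of the total energy, not small), so a direct Cauchy--Schwarz falls two powers of $\la$ short of $o(\la^{-2})$. Integrating by parts twice and trading $p_j''$ for $\xi_j^2 p_j$ (plus harmless lower-order terms) through \eqref{COMB6-DETAIL} is precisely what annihilates those high-frequency contributions, after which every surviving product contains a small factor of $v$ and the lone $\la^2\int\theta_1^2 v_j\overline{p_j}$ term is neutralised by Young's inequality; the exactness of the cancellation rests on the algebraic structure of \eqref{COMB5-DETAIL}--\eqref{COMB6-DETAIL} (ultimately on the reduction \eqref{alpha1}). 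I expect this rearrangement, and the bookkeeping of powers of $\la$ that makes everything land on $o(\la^{-2})$, to be where the real work lies.
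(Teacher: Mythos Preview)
Your argument is correct and reaches the same conclusion as the paper, but the route is organised differently. The paper tests \eqref{COMB5-DETAIL} against $\beta\theta_1\overline{p_j}$ and \eqref{COMB6-DETAIL} against $-\theta_1\overline{v_j}$, then adds the real parts: the cross terms $\alpha_1\beta\xi_j^2\int\theta_1 v_j\overline{p_j}$ and $\alpha_1\beta\int\theta_1 v_j'\overline{p_j'}$ appear with opposite signs in the two identities and cancel upon addition, leaving exactly your leftover $\la^2(\rho\beta-\mu\alpha)\int\theta_1 v_j\overline{p_j}$, which is then absorbed by the same Young's-inequality step you describe. You instead keep only one equation as a multiplier identity, shift both $x$-derivatives off $v_j$ by two integrations by parts, and then substitute $p_j''$ from \eqref{COMB6-DETAIL}; algebraically this performs the identical cancellation of the $\xi_j^2$-term (and simultaneously avoids ever producing the $v_j'\overline{p_j'}$ term). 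Either route works and the power-counting is the same; the paper's symmetric ``dual testing'' makes the cancellation visible as a structural symmetry of the pair \eqref{COMB5-DETAIL}--\eqref{COMB6-DETAIL}, while your version packages it as a substitution and has the minor advantage of never needing control of $v_j'$ on $(a,b)$ at this stage.
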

\begin{proof}
Multiplying \eqref{COMB5-DETAIL} by $\beta \theta_1\overline{p_j}$, using integration by parts over $(0,1)$, and the definition of $\theta_1$, we get 
\begin{equation}\label{EQ2-INFO2-SQUARE}
\begin{array}{l}
\displaystyle 
\la^2\rho\beta\int_0^1\theta_1v_j\overline{p_j}dx-\alpha_1\beta\int_0^1\theta_1\xi_j^2v_j\overline{p_j}dx-\underbrace{\alpha_1\beta\int_0^1\theta_1'v_j'\overline{p_j}dx}_{:=J_1}-\alpha_1\beta\int_0^1\theta_1v_j'\overline{p_j'}dx\\
\displaystyle 
+\gamma\mu\beta\int_0^1\theta_1\abs{\la p_j}^2dx-i\la \beta\int_0^1d\theta_1 v_j\overline{p_j}dx=\beta \int_0^1\theta_1F_j^3\overline{p_j}dx.	
\end{array}	
\end{equation}
Integrating by parts $J_1$, we get 
\begin{equation}\label{EQ3-INFO2-SQUARE}
J_1=-\alpha_1\beta\int_0^1\theta_1^{''}v_j\overline{p_j}dx-\alpha_1\beta\int_0^1\theta_1'v_j\overline{p_j}dx. 	
\end{equation}
Inserting \eqref{EQ3-INFO2-SQUARE} in \eqref{EQ2-INFO2-SQUARE}, we get 
\begin{equation}\label{EQ4-INFO2-SQUARE}
\begin{array}{l}
\displaystyle 
\la^2\rho\beta\int_0^1\theta_1v_j\overline{p_j}dx-\alpha_1\beta\int_0^1\theta_1\xi_j^2v_j\overline{p_j}dx+\alpha_1\beta\int_0^1\theta_1^{''}v_j\overline{p_j}dx+\alpha_1\beta\int_0^1\theta_1'v_j\overline{p_j'}dx\\
\displaystyle 
-\alpha_1\beta\int_0^1\theta_1v_j'\overline{p_j'}dx+\gamma\mu\beta\int_0^1\theta_1\abs{\la p_j}^2dx-i\la \beta\int_0^1d\theta_1 v_j\overline{p_j}dx=\beta \int_0^1\theta_1F_j^3\overline{p_j}dx.
\end{array}	
\end{equation}
Multiplying \eqref{COMB6-DETAIL} by $-\theta_1\overline{v_j}$ integrating by parts over $(0,1)$, we get 
\begin{equation}\label{EQ5-INFO2-SQUARE}
\begin{array}{l}
\displaystyle 	
-\la^2\mu\alpha\int_0^1\theta_1p_j\overline{v_j}dx+\alpha_1\beta\int_0^1\theta_1\xi_j^2p_j\overline{v_j}dx+\alpha_1\beta\int_0^1\theta_1'p_j'\overline{v_j}dx+\alpha_1\beta\int_0^1\theta_1p_j'\overline{v_j'}\\
\displaystyle 
-\rho\gamma\beta\int_0^1\theta_1\abs{\la v_j}^2dx+i\la \gamma\beta\int_0^1\theta_1d\abs{v_j}^2dx=-\int_0^1\theta_1F_j^4\overline{v_j}dx. 
\end{array}
\end{equation}
Adding the real part of \eqref{EQ4-INFO2-SQUARE} and that of \eqref{EQ5-INFO2-SQUARE}, then taking  the sum from $1$ to $\infty$, we get
\begin{equation}\label{EQ6-INFO2-SQUARE}
\begin{array}{l}
\displaystyle 
\la^2\left(\rho\beta-\mu\alpha\right)\sum_{j=1}^{\infty}\Re\left(\int_0^1\theta_1p_j\overline{v_j}dx\right)+\alpha_1\beta\sum_{j=1}^{\infty}\Re\left(\int_0^1\theta_1^{''}v_j\overline{p_j}dx\right)+2\alpha_1\beta\sum_{j=1}^{\infty}\Re\left(\int_0^1\theta_1'v_j\overline{p_j'}dx\right)\\
\displaystyle 
+\gamma\mu\beta\sum_{j=1}^{\infty}\int_0^1\theta_1\abs{\la p_j}^2dx-\beta\sum_{j=1}^{\infty}\Re\left(i\la \int_0^1d\theta_1 v_j\overline{p_j}dx\right)-\rho\gamma\beta\sum_{j=1}^{\infty}\int_0^1\theta_1\abs{\la v_j}^2dx\\
\displaystyle
=\beta\sum_{j=1}^{\infty}\Re\left(\int_0^1\theta_1F_j^3\overline{p_j}dx\right)-\sum_{j=1}^{\infty}\Re\left(\int_0^1\theta_1F_j^4\overline{v_j}dx\right).
\end{array}	
\end{equation}
Using Cauchy-Schwarz inequality, \eqref{EQ1-INFO1-SQUARE}, the fact that $\|U\|_{\mathcal{H}_{\square}}=1$ and $\|F\|_{\mathcal{H}_{\square}}=o(1)$, we get 
\begin{equation}\label{EQ7-INFO2-SQUARE}
\left\{\begin{array}{l}
\displaystyle 
\left|\Re\left(\int_0^1\theta_1^{''}v_j\overline{p_j}dx\right)\right|\leq \max_{x\in [0,1]}\abs{\theta_1^{''}(x)}\left(\sum_{j=1}^{\infty}\|v_j\|^2_{L^2(a,b)}\right)^{\frac{1}{2}}\left(\sum_{j=1}^{\infty}\|p_j\|^2_{L^2(a,b)}\right)^{\frac{1}{2}}=o(\la^{-3}),\\[0.1in]
\displaystyle 
\left|\Re\left(\int_0^1\theta_1'v_j\overline{p_j'}dx\right)\right|\leq \max_{x\in [0,1]}\abs{\theta_1'(x)}\left(\sum_{j=1}^{\infty}\|v_j\|^2_{L^2(a,b)}\right)^{\frac{1}{2}}\left(\sum_{j=1}^{\infty}\|p_j'\|^2_{L^2(a,b)}\right)^{\frac{1}{2}}=o(\la^{-2}),\\[0.1in]
\displaystyle 
\left|\sum_{j=1}^{\infty}\Re\left(i\la \int_0^1d\theta_1 v_j\overline{p_j}dx\right)\right|\leq \|d\|_{\infty}\left(\sum_{j=1}^{\infty}\|v_j\|^2_{L^2(a,b)}\right)^{\frac{1}{2}}\left(\sum_{j=1}^{\infty}\|\la p_j\|^2_{L^2(a,b)}\right)^{\frac{1}{2}}=o(\la^{-2}),\\[0.1in]
\displaystyle 
\left|\sum_{j=1}^{\infty}\Re\left(\int_0^1\theta_1F_j^3\overline{p_j}dx\right)\right|\leq \left(\sum_{j=1}^{\infty}\|F_j^3\|^2_{L^2(a,b)}\right)^{\frac{1}{2}}\left(\sum_{j=1}^{\infty}\|p_j\|^2_{L^2(a,b)}\right)^{\frac{1}{2}}=o(\la^{-2}),\\[0.1in]
\displaystyle 
\left|\sum_{j=1}^{\infty}\Re\left(\int_0^1\theta_1F_j^4\overline{v_j}dx\right)\right|\leq \left(\sum_{j=1}^{\infty}\|F_j^4\|^2_{L^2(a,b)}\right)^{\frac{1}{2}}\left(\sum_{j=1}^{\infty}\|v_j\|^2_{L^2(a,b)}\right)^{\frac{1}{2}}=o(\la^{-3}).
\end{array}
\right.	
\end{equation}
Inserting \eqref{EQ7-INFO2-SQUARE} in \eqref{EQ6-INFO2-SQUARE} and using \eqref{EQ1-INFO1-SQUARE}, we get 
\begin{equation}\label{EQ8-INFO2-SQUARE}
\gamma\mu\beta\sum_{j=1}^{\infty}\int_0^1\theta_1\abs{\la p_j}^2dx\leq \la^2\left|\rho\beta-\mu\alpha\right|\sum_{j=1}^{\infty}\int_0^1\theta_1\abs{p_j}\abs{v_j}dx+o(\la^{-2}).	
\end{equation}
Using Young inequality, we get 
\begin{equation}\label{EQ9-INFO2-SQUARE}
\la^2\left|\rho\beta-\mu\alpha\right|\sum_{j=1}^{\infty}\int_0^1\theta_1\abs{p_j}\abs{v_j}dx\leq \frac{\gamma\mu\beta}{2}\sum_{j=1}^{\infty}\int_0^1\theta_1\abs{\la p_j}^2dx+\frac{(\rho \beta-\mu\alpha)^2}{2\gamma\mu\beta}\sum_{j=1}^{\infty}\int_0^1\theta_1\abs{\la v_j}^2dx.	
\end{equation}
Inserting \eqref{EQ9-INFO2-SQUARE} in \eqref{EQ8-INFO2-SQUARE} and using \eqref{EQ1-INFO1-SQUARE}, we get the first estimation in \eqref{EQ1-INFO2-SQUARE}. Using the first estimation in \eqref{DETAIL3} and the fact that $\|F\|_{\mathcal{H}_{\square}}=o(1)$, we get the second estimation in \eqref{EQ1-INFO2-SQUARE}. The proof has been completed. 
\end{proof}

\begin{lemma}\label{INFO3-SQUARE}
The solution $(v,z,p,q)$ of system \eqref{1DETAIL} satisfies the following estimations
\begin{equation}\label{EQ1-INFO3-SQUARE}
\sum_{j=1}^{\infty}\left(\|v_j'\|^2_{L^2(D_{\varepsilon})}+\xi_j^2\|v_j\|^2_{L^2(D_{\varepsilon})}\right)=o(\la^{-2}). 	
\end{equation}
\end{lemma}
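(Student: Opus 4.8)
The plan is to test the reduced first equation \eqref{COMB5-DETAIL} against $\theta_2\overline{v_j}$. The reason for choosing $\theta_2$ rather than $\theta_1$ is that its support lies inside $[a+\varepsilon,b-\varepsilon]\subset(a,b)$, which is exactly where the two preceding lemmas are usable: on $(a,b)$ we have $d\ge d_0$, so the second estimate of Lemma \ref{INFO1-SQUARE} gives $\sum_j\|\lambda v_j\|^2_{L^2(a,b)}\le d_0^{-1}\sum_j\|\lambda\sqrt d v_j\|^2=o(\lambda^{-2})$, while Lemma \ref{INFO2-SQUARE} furnishes $\sum_j\|\lambda p_j\|^2_{L^2(D_\varepsilon)}=o(\lambda^{-2})$, both of which are needed on the right-hand side below.

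Concretely, I would multiply \eqref{COMB5-DETAIL} by $\theta_2\overline{v_j}$ and integrate over $(0,1)$. Only the term $\alpha_1\int_0^1\theta_2 v_j''\overline{v_j}\,dx$ needs an integration by parts; since $\theta_2$ and $\theta_2'$ vanish near $0$ and $1$ no boundary terms survive, and using $\Re(v_j'\overline{v_j})=\tfrac12(|v_j|^2)'$ a second integration by parts turns $\Re\!\int\theta_2'v_j'\overline{v_j}$ into $-\tfrac12\!\int\theta_2''|v_j|^2$. Taking real parts kills the damping contribution $-i\lambda\int d\theta_2|v_j|^2$ (purely imaginary), and after rearranging one gets, for each $j$,
\begin{equation*}
\alpha_1\!\int_0^1\!\theta_2\big(|v_j'|^2+\xi_j^2|v_j|^2\big)dx=\lambda^2\rho\!\int_0^1\!\theta_2|v_j|^2dx+\frac{\alpha_1}{2}\!\int_0^1\!\theta_2''|v_j|^2dx+\gamma\mu\,\Re\!\int_0^1\!\theta_2(\lambda p_j)\overline{(\lambda v_j)}dx-\Re\!\int_0^1\!\theta_2F_j^3\overline{v_j}dx.
\end{equation*}
Summing over $j\ge1$ and estimating the right-hand side: the first term is $\le\rho\sum_j\|\lambda v_j\|^2_{L^2(a,b)}=o(\lambda^{-2})$; the second is $\le\|\theta_2''\|_\infty\sum_j\|v_j\|^2_{L^2(a,b)}=o(\lambda^{-4})$; the third, by Cauchy-Schwarz in $x$ and then in $j$, is $\le\gamma\mu\big(\sum_j\|\lambda p_j\|^2_{L^2(D_\varepsilon)}\big)^{1/2}\big(\sum_j\|\lambda v_j\|^2_{L^2(a,b)}\big)^{1/2}=o(\lambda^{-2})$ by Lemmas \ref{INFO1-SQUARE}--\ref{INFO2-SQUARE}; and the last is $\le\big(\sum_j\|F_j^3\|^2_{L^2(0,1)}\big)^{1/2}\big(\sum_j\|v_j\|^2_{L^2(a,b)}\big)^{1/2}=o(\lambda^{-3})$, where $\sum_j\|F_j^3\|^2_{L^2(0,1)}=o(\lambda^{-2})$ follows from \eqref{F3F4}, \eqref{F1F2} and $\|F\|_{\mathcal{H}_\square}=o(1)$ (already implicit in \eqref{EQ7-INFO2-SQUARE}). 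Since $0\le\theta_2\le1$ and $\theta_2\equiv1$ on $[a+2\varepsilon,b-2\varepsilon]$, the left-hand side dominates $\alpha_1\sum_j\big(\|v_j'\|^2_{L^2(a+2\varepsilon,b-2\varepsilon)}+\xi_j^2\|v_j\|^2_{L^2(a+2\varepsilon,b-2\varepsilon)}\big)$, which — up to the harmless relabeling of the small parameter $\varepsilon$ in the construction of the cut-offs — is exactly \eqref{EQ1-INFO3-SQUARE}.

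I do not expect any substantial obstacle: the whole argument is one integration by parts followed by Cauchy-Schwarz, and the only point requiring genuine care is the bookkeeping of the cut-off. Specifically, the mixed term $\gamma\mu\lambda^2\int\theta\,p_j\overline{v_j}$ is what forces the use of $\theta_2$ (supported in $[a+\varepsilon,b-\varepsilon]$) instead of $\theta_1$ (supported in $[a,b]$), since the $\|\lambda p_j\|$-control from Lemma \ref{INFO2-SQUARE} is available only on $D_\varepsilon$; and it is the localization inside $(a,b)$ that lets us trade $\|\lambda\sqrt d v_j\|$ from Lemma \ref{INFO1-SQUARE} for $\|\lambda v_j\|$. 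Everything else is routine.
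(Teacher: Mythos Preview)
Your proof is correct and follows essentially the same route as the paper: multiply \eqref{COMB5-DETAIL} by a cut-off times $\overline{v_j}$, integrate by parts, sum in $j$, and estimate the right-hand side via Cauchy--Schwarz together with Lemmas \ref{INFO1-SQUARE} and \ref{INFO2-SQUARE}. The only difference is the choice of cut-off: the paper uses $\theta_1$ (supported in $[a,b]$) and therefore lands exactly on $D_\varepsilon$, whereas you use $\theta_2$ and obtain the estimate on $D_{2\varepsilon}$, which, as you note, is an innocuous shift of $\varepsilon$. Your stated reason for preferring $\theta_2$ --- that the $\lambda p_j$ control from Lemma \ref{INFO2-SQUARE} is only asserted on $D_\varepsilon$ --- is a fair reading of the lemma's statement; the paper's use of $\theta_1$ in fact relies on the slightly stronger $\theta_1$-weighted bound $\sum_j\int_0^1\theta_1|\lambda p_j|^2\,dx=o(\lambda^{-2})$ that is proved (but not stated) in \eqref{EQ8-INFO2-SQUARE}--\eqref{EQ9-INFO2-SQUARE}.
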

\begin{proof}
Multiplying \eqref{COMB5-DETAIL} by $-\theta_1\overline{v_j}$, integrating by parts over $(0,1)$, we get 
\begin{equation}\label{EQ2-INFO3-SQUARE}
\begin{array}{l}
\displaystyle 
-\rho\int_0^1\theta_1\abs{\la v_j}^2dx+\alpha_1\xi_j^2\int_0^1\theta_1\abs{v_j}^2dx+\alpha_1\int_0^1\theta_1\abs{v_j'}^2dx+\alpha_1\int_0^1\theta_1'v_j'\overline{v_j}dx\\
\displaystyle 
-\gamma\mu\la^2\int_0^1\theta_1p_j\overline{v_j}dx+i\la \int_0^1\theta_1 d\abs{v_j}^2dx=-\int_0^1\theta_1F_j^3\overline{v_j}dx. 
\end{array}	
\end{equation}
Taking the sum on $j$ in \eqref{EQ2-INFO3-SQUARE} and using \eqref{EQ1-INFO1-SQUARE}, we get 
\begin{equation}\label{EQ3-INFO3-SQUARE}
\begin{array}{l}
\displaystyle
\alpha_1\sum_{j=1}^{\infty}\int_0^1\theta_1(\abs{v_j'}^2+\xi_j^2\abs{v_j}^2)dx=- \alpha_1\sum_{j=1}^{\infty}\int_0^1\theta_1'v_j'\overline{v_j}dx+\gamma\mu\sum_{j=1}^{\infty}\la^2\int_0^1\theta_1p_j\overline{v_j}dx\\[0.1in]
\displaystyle 
-\sum_{j=1}^{\infty}\int_0^1\theta_1F_j^3\overline{v_j}dx+o(\la^{-2}).
\end{array}	
\end{equation}
Using Cauchy-Schwarz inequality, the definition of the function $\theta_1$, the fact that $\|U\|_{\mathcal{H}_{\square}}=1$, $\|F\|_{\mathcal{H}_{\square}}=o(1)$, \eqref{EQ1-INFO1-SQUARE} and \eqref{EQ1-INFO2-SQUARE}, we get 
\begin{equation*}
\left|\sum_{j=1}^{\infty}\int_0^1\theta_1'v_j'\overline{v_j}dx\right|=o(\la^{-2}),\quad \left|\sum_{j=1}^{\infty}\la^2\int_0^1\theta_1p_j\overline{v_j}dx\right|=o(\la^{-2})\quad \text{and}\quad  \left|\sum_{j=1}^{\infty}\int_0^1\theta_1F_j^3\overline{v_j}dx\right|=o(\la^{-3}). 
\end{equation*}
Inserting the above estimations in \eqref{EQ3-INFO3-SQUARE}, we get \eqref{EQ1-INFO3-SQUARE}. The proof has been completed. 	
\end{proof}
\begin{lemma}\label{INFO4-SQUARE}
The solution $(v,z,p,q)$ of system \eqref{1DETAIL} satisfies the following estimations
\begin{equation}\label{EQ1-INFO4-SQUARE}
\sum_{j=1}^{\infty}\left(\|p_j'\|^2_{L^2(D_{2\varepsilon})}+\xi_j^2\|p_j\|^2_{L^2(D_{2\varepsilon})}\right)=o(\la^{-2}),
\end{equation}
where $D_{2\varepsilon}:=(a+2\varepsilon,b-2\varepsilon)$ with a positive real number $\varepsilon$ small enough such that $\varepsilon<\frac{b-a}{4}$. 	
\end{lemma}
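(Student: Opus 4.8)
The plan is to mimic the proof of Lemma~\ref{INFO3-SQUARE}, but now extracting information on $p_j$ rather than $v_j$, using the second cut-off function $\theta_2$ (which is supported one $\varepsilon$-layer deeper, so that its derivatives $\theta_2'$, $\theta_2''$ are supported inside the region $D_\varepsilon$ where we already control $v_j$, $v_j'$, $\la p_j$, $q_j$). Concretely, I would multiply \eqref{COMB6-DETAIL} by $-\theta_2\overline{p_j}$, integrate by parts over $(0,1)$, and take the real part. The $\alpha_1\beta p_j''$ term produces, after integration by parts and using $\theta_2'(0)=\theta_2'(1)=0$, the positive quantity $\alpha_1\beta\int_0^1\theta_2|p_j'|^2dx$ together with a $\theta_2'$-cross term; the $(\la^2\mu\alpha-\alpha_1\beta\xi_j^2)p_j$ term gives $-\alpha_1\beta\int_0^1\theta_2\xi_j^2|p_j|^2dx$ plus $\la^2\mu\alpha\int_0^1\theta_2|p_j|^2dx$; the remaining terms are the coupling $\rho\gamma\beta\la^2\int_0^1\theta_2 v_j\overline{p_j}dx$, the damping term $-i\la\gamma\beta\int_0^1\theta_2 d\,v_j\overline{p_j}dx$, and the source $-\int_0^1\theta_2 F_j^4\overline{p_j}dx$.

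After summing over $j$, I would rearrange to isolate $\alpha_1\beta\sum_j\int_0^1\theta_2(|p_j'|^2+\xi_j^2|p_j|^2)dx$ on the left. On the right I expect: (i) the term $\la^2\mu\alpha\sum_j\int_0^1\theta_2|p_j|^2dx$, which is $\mu\alpha\sum_j\|\la p_j\|^2_{L^2(D_\varepsilon)}$ up to the support of $\theta_2\le\mathbf 1_{D_\varepsilon}$, hence $o(\la^{-2})$ by the first estimate of Lemma~\ref{INFO2-SQUARE}; (ii) the cross term $\alpha_1\beta\sum_j\int_0^1\theta_2'p_j'\overline{p_j}dx$, bounded by Cauchy--Schwarz using $\supp\theta_2'\subset D_\varepsilon$ and the control of $\sum_j\|p_j\|^2_{L^2(D_\varepsilon)}$ from Lemma~\ref{INFO2-SQUARE} — but the factor $\sum_j\|p_j'\|^2_{L^2(D_\varepsilon)}$ is not yet known to be small, so this term must instead be absorbed via Young's inequality into the left-hand side $\alpha_1\beta\sum_j\int\theta_2|p_j'|^2$, paying attention that $\theta_2'$ is supported where $\theta_1\equiv1$ so the absorbed piece is genuinely controlled; (iii) the coupling $\rho\gamma\beta\sum_j\la^2\int_0^1\theta_2 v_j\overline{p_j}dx$, estimated by Cauchy--Schwarz as $\lesssim(\sum_j\|\la v_j\|^2_{L^2(D_\varepsilon)})^{1/2}(\sum_j\|\la p_j\|^2_{L^2(D_\varepsilon)})^{1/2}$, which is $o(\la^{-2})$ by Lemmas~\ref{INFO2-SQUARE} and \ref{INFO3-SQUARE}; (iv) the damping term, bounded by $\|d\|_\infty\|\la v\|\|p\|$-type products, again $o(\la^{-2})$; (v) the source term, $o(\la^{-2})$ (indeed $o(\la^{-3})$) since $\sum_j\|F_j^4\|^2_{L^2(0,1)}\lesssim\la^{-2}\|F\|_{\mathcal H_\square}^2$.

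The main obstacle is handling the $\theta_2'$ cross-terms coming from the two integrations by parts of the second-derivative term: unlike in Lemma~\ref{INFO3-SQUARE}, a naive Cauchy--Schwarz bound on $\sum_j\int\theta_2'p_j'\overline{p_j}$ would require an a priori bound on $\sum_j\|p_j'\|^2_{L^2(D_\varepsilon)}$, which is exactly of the strength we are trying to prove. The fix is a Young-inequality absorption: write $|\theta_2'p_j'\overline{p_j}|\le\tfrac{\eta}{2}\theta_2|p_j'|^2\cdot(\text{something})+\dots$; but since $\theta_2'$ and $\theta_2$ have disjoint-ish supports one must be slightly more careful and instead use that on $\supp\theta_2'$ we have $\theta_1\equiv1$, so $\sum_j\int_{\supp\theta_2'}|p_j'|^2\le\sum_j\int_0^1\theta_1|p_j'|^2$ — and this latter quantity is controlled by rerunning the $\theta_1$-argument of Lemma~\ref{INFO3-SQUARE} applied to equation \eqref{COMB6-DETAIL} (or is simply $o(1)$ from $\|U\|_{\mathcal H_\square}=1$, which already suffices to make the cross-term $o(\la^{-1})$ after pairing with the $o(\la^{-2})$ bound on $\sum_j\|p_j\|^2_{L^2(D_\varepsilon)}$). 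Once all cross-terms are shown to be $o(\la^{-2})$ or absorbed, dividing by $\alpha_1\beta>0$ and using $\theta_2\ge\mathbf 1_{D_{2\varepsilon}}$ yields \eqref{EQ1-INFO4-SQUARE}.
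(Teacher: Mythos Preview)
Your approach is essentially the paper's: multiply \eqref{COMB6-DETAIL} by $-\theta_2\overline{p_j}$, integrate by parts, sum over $j$, and estimate each remaining term. The difference is that you overcomplicate the treatment of the cross term $\sum_j\int_0^1\theta_2'p_j'\overline{p_j}\,dx$.

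There is no obstacle there. You correctly note that $\supp\theta_2'\subset D_\varepsilon$, but you misread the strength of Lemma~\ref{INFO2-SQUARE}: it gives $\sum_j\|\la p_j\|^2_{L^2(D_\varepsilon)}=o(\la^{-2})$, hence $\sum_j\|p_j\|^2_{L^2(D_\varepsilon)}=o(\la^{-4})$, not merely $o(\la^{-2})$. Combined with the trivial bound $\sum_j\|p_j'\|^2_{L^2(0,1)}=O(1)$ (which follows from $\|U\|_{\mathcal H_\square}=1$, since the energy norm controls both $\sum_j\|v_j'\|^2$ and $\sum_j\|\gamma v_j'-p_j'\|^2$), a single Cauchy--Schwarz application yields
\[
\Bigl|\sum_{j}\int_0^1\theta_2'p_j'\overline{p_j}\,dx\Bigr|
\le C\Bigl(\sum_j\|p_j'\|^2\Bigr)^{1/2}\Bigl(\sum_j\|p_j\|^2_{L^2(D_\varepsilon)}\Bigr)^{1/2}
= O(1)\cdot o(\la^{-2}) = o(\la^{-2}),
\]
exactly as the paper does. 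Your Young-absorption detour (and the worry about $\theta_2$ versus $\theta_1$ supports) is unnecessary; once you correct the exponent, your parenthetical remark already closes the argument. All the other terms you list in (i), (iii), (iv), (v) are estimated in the paper precisely as you describe.
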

\begin{proof}
Multiplying \eqref{COMB6-DETAIL} by $-\theta_2\overline{p_j}$, integrating by parts over $(0,1)$, we get 
\begin{equation*}
\begin{array}{l}
\displaystyle 
-\mu\alpha\int_0^1\theta_2\abs{\la p_j}^2dx+\alpha_1\beta\xi_j^2\int_0^1\theta_2\abs{p_j}^2dx+\alpha_1\beta\int_0^1\theta_2\abs{p_j'}^2dx+\alpha_1\beta\int_0^1\theta_2'p_j'\overline{p_j}dx\\
\displaystyle 
-\rho\gamma\beta\la^2\int_0^1\theta_2v_j\overline{p_j}dx+i\la\gamma\beta\int_0^1d\theta_2v_j\overline{p_j}dx=-\int_0^1\theta_2F_j^4\overline{p_j}dx. 
\end{array}
\end{equation*}
Taking the sum on $j$ in the above equation and using \eqref{EQ1-INFO2-SQUARE}, we get 
\begin{equation}\label{EQ2-INFO4-SQUARE}
\begin{array}{l}
\displaystyle 
\alpha_1\beta\sum_{j=1}^{\infty}\int_0^1\theta_2(\abs{p_j'}+\xi_j^2\abs{p_j}^2)dx= -\alpha_1\beta\sum_{j=1}^{\infty}\int_0^1\theta_2'p_j'\overline{p_j}dx+\rho \gamma\beta \la^2\sum_{j=1}^{\infty}\int_0^1\theta_2v_j\overline{p_j}dx\\[0.1in]
\displaystyle 
-i\la \gamma\beta\sum_{j=1}^{\infty}\int_0^1d\theta_2v_j\overline{p_j}dx-\sum_{j=1}^{\infty}\int_0^1\theta_2F_j^4\overline{p_j}dx+o(\la^{-2}).
\end{array}
\end{equation}
Using Cauchy-Schwarz inequality, the definition of the function $\theta_2$, the fact that $\|U\|_{\mathcal{H}_{\square}}=1$, $\|F\|_{\mathcal{H}_{\square}}=o(1)$, \eqref{EQ1-INFO1-SQUARE} and \eqref{EQ1-INFO2-SQUARE}, we get 
\begin{equation*}
\left\{\begin{array}{ll}
\displaystyle 
\left|\sum_{j=1}^{\infty}\int_0^1\theta_2'p_j'\overline{p_j}dx\right|=o(\la^{-2}),&\displaystyle 
\left|\la^2\sum_{j=1}^{\infty}\int_0^1\theta_2v_j\overline{p_j}dx\right|=o(\la^{-2}),\\[0.1in]
\displaystyle 
\left|\la\sum_{j=1}^{\infty}\int_0^1d\theta_2v_j\overline{p_j}dx\right|=o(\la^{-3}),&\displaystyle 
 \left|\sum_{j=1}^{\infty}\int_0^1\theta_2F_j^4\overline{p_j}dx\right|=o(\la^{-3}).
\end{array}
\right.
\end{equation*}
Inserting the above estimations in \eqref{EQ2-INFO4-SQUARE} and using the definition of the function $\theta_2$, we get \eqref{EQ1-INFO4-SQUARE}. The proof has been completed.
\end{proof} 
\begin{lemma}\label{INFO5-SQUARE}
Let $h\in C^{\infty}([0,1])$ such that $h(0)=h(1)=0$. The solution $(v,z,p,q)$ of system \eqref{1DETAIL} satisfies the following estimation
\begin{equation}\label{EQ1-INFO5-SQUARE}
\begin{array}{l}
\displaystyle 
\sum_{j=1}^{\infty}\int_0^1	h'(\rho \la^2-\alpha_1\xi_j^2)\abs{v_j}^2dx+\alpha_1\sum_{j=1}^{\infty}\int_0^1h'\abs{v_j'}^2dx+2\sum_{j=1}^{\infty}\Re\left(i\la \int_0^Lhdv_j\overline{v_j'}dx\right)\\
\displaystyle 
+\mu\sum_{j=1}^{\infty}\int_0^1h'\abs{\la p_j}^2dx+\beta \sum_{j=1}^{\infty}\int_0^1h'\abs{\gamma v_j'-p_j'}^2dx-\beta \sum_{j=1}^{\infty}\int_0^1h'\xi_j^2\abs{\gamma v_j-p_j}^2dx=o(\la^{-2}).
\end{array}	
\end{equation}
\end{lemma}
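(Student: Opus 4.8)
The plan is to obtain \eqref{EQ1-INFO5-SQUARE} as a multiplier identity associated with the combined one–dimensional system \eqref{COMB1-DETAIL}--\eqref{COMB2-DETAIL}, its right–hand side being $o(\la^{-2})$ only because $F\to 0$ in $\mathcal{H}_{\square}$ (none of Lemmas~\ref{INFO1-SQUARE}--\ref{INFO4-SQUARE} is needed here). \textbf{Step 1.} First I would multiply \eqref{COMB1-DETAIL} by $h\,\overline{v_j'}$ and \eqref{COMB2-DETAIL} by $h\,\overline{p_j'}$, integrate over $(0,1)$, take real parts and sum over $j\geq 1$. Using $\Re(w\,\overline{w'})=\tfrac12(|w|^2)'$ and $\Re(w''\,\overline{w'})=\tfrac12(|w'|^2)'$ together with $h(0)=h(1)=0$, each diagonal contribution collapses to a term $-\tfrac12\int_0^1 h'|\cdot|^2\,dx$. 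For the coupling terms I would use the pointwise identities $\Re\big(p_j''\,\overline{v_j'}+v_j''\,\overline{p_j'}\big)=\tfrac{d}{dx}\Re(p_j'\,\overline{v_j'})$ and $\Re\big(p_j\,\overline{v_j'}+v_j\,\overline{p_j'}\big)=\tfrac{d}{dx}\Re(p_j\,\overline{v_j})$, so that after one further integration by parts (the boundary terms again vanishing by $h(0)=h(1)=0$) the second–order coupling contributes $\gamma\beta\int_0^1 h'\Re(v_j'\,\overline{p_j'})\,dx$ and the $\xi_j^2$–coupling contributes $-\gamma\beta\xi_j^2\int_0^1 h'\Re(p_j\,\overline{v_j})\,dx$.

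\textbf{Step 2.} Writing $\mathcal{I}$ for the left–hand side of \eqref{EQ1-INFO5-SQUARE}, collecting the terms of Step~1 and multiplying by $-2$ should give
\[
\mathcal{I}=-2\sum_{j=1}^{\infty}\Re\Big(\int_0^1 F_j^1 h\,\overline{v_j'}\,dx+\int_0^1 F_j^2 h\,\overline{p_j'}\,dx\Big).
\]
The mechanism is that, using $\alpha=\alpha_1+\gamma^2\beta$, the quadratic forms reassemble exactly into the energy densities of $\mathcal{H}_{\square}$: one has $\alpha|v_j'|^2-2\gamma\beta\Re(v_j'\,\overline{p_j'})+\beta|p_j'|^2=\alpha_1|v_j'|^2+\beta|\gamma v_j'-p_j'|^2$, and the same identity with $\xi_j^2$ weights in place of the derivatives gives $\alpha_1\xi_j^2|v_j|^2+\beta\xi_j^2|\gamma v_j-p_j|^2$; the $\la^2$ terms $\rho\la^2|v_j|^2$ and $\mu|\la p_j|^2$ survive untouched, the $-\alpha_1\xi_j^2|v_j|^2$ produced by the recombination merges with $\rho\la^2|v_j|^2$ into $(\rho\la^2-\alpha_1\xi_j^2)|v_j|^2$, and the damping term $-i\la d\,v_j$ tested against $h\,\overline{v_j'}$ becomes, after the factor $-2$, precisely $2\sum_j\Re\big(i\la\int_0^1 h\,d\,v_j\,\overline{v_j'}\,dx\big)$.

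\textbf{Step 3.} It then remains to show $\sum_{j}\Re\big(\int_0^1 F_j^1 h\,\overline{v_j'}\,dx+\int_0^1 F_j^2 h\,\overline{p_j'}\,dx\big)=o(\la^{-2})$. By \eqref{F1F2}, $F_j^1=-\la^{-2}(\rho f_j^2+d f_j^1)-i\la^{-1}\rho f_j^1$ and $F_j^2=-\la^{-2}\mu f_j^4-i\la^{-1}\mu f_j^3$. From $\|U\|_{\mathcal{H}_{\square}}=1$ with \eqref{DETAIL1}, \eqref{DETAIL3} one records $\sum_j\|v_j'\|^2=O(1)$, $\sum_j\|p_j'\|^2=O(1)$, $\sum_j\|v_j\|^2=O(\la^{-2})$, $\sum_j\|p_j\|^2=O(\la^{-2})$, and from $\|F\|_{\mathcal{H}_{\square}}=o(1)$ one gets $\sum_j\big(\|f_j^k\|^2+\|(f_j^k)'\|^2\big)=o(1)$ for $k\in\{1,3\}$ and $\sum_j\|f_j^k\|^2=o(1)$ for $k\in\{2,4\}$. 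The $\la^{-2}$–parts are then $o(\la^{-2})$ by Cauchy--Schwarz directly. For the $\la^{-1}$–parts I would first integrate by parts, $\int_0^1 f_j^1 h\,\overline{v_j'}\,dx=-\int_0^1 (f_j^1 h)'\,\overline{v_j}\,dx$ (the boundary term vanishing by $h(0)=h(1)=0$) and likewise with $f_j^3$ and $p_j$, trading the $H^1$–bounds of $v_j',p_j'$ for the $O(\la^{-1})$–smallness of $v_j,p_j$; Cauchy--Schwarz then yields $\la^{-1}\cdot o(1)\cdot O(\la^{-1})=o(\la^{-2})$. All term–by–term integrations by parts and interchanges of $\sum_j$ with $\int_0^1$ are legitimate because $U\in D(\mathcal{A}_{\square})$ and $F\in\mathcal{H}_{\square}$ make every series a Cauchy--Schwarz product of $\ell^2$–sequences.

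The main obstacle I anticipate is Steps~1--2: not any individual computation, but the bookkeeping of signs and of the coefficients $\alpha,\alpha_1,\beta,\gamma,\mu,\rho$ through all the integrations by parts and the recombination via $\alpha=\alpha_1+\gamma^2\beta$, which must land exactly on the six listed terms of \eqref{EQ1-INFO5-SQUARE}. Step~3 is routine once the summability bounds above are recorded.
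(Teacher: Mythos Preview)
Your proposal is correct and follows essentially the same route as the paper: multiply \eqref{COMB1-DETAIL} and \eqref{COMB2-DETAIL} by $h\overline{v_j'}$ and $h\overline{p_j'}$, integrate by parts, sum, and regroup via $\alpha=\alpha_1+\gamma^2\beta$; then handle the right-hand side by splitting $F_j^1,F_j^2$ into their $\la^{-2}$- and $\la^{-1}$-parts, with one extra integration by parts on the latter to exploit $\sum_j\|v_j\|^2,\sum_j\|p_j\|^2=O(\la^{-2})$. The only cosmetic difference is that you combine the two coupling terms in one stroke via the product-rule identities $\Re(p_j''\overline{v_j'}+v_j''\overline{p_j'})=\tfrac{d}{dx}\Re(p_j'\overline{v_j'})$ and $\Re(p_j\overline{v_j'}+v_j\overline{p_j'})=\tfrac{d}{dx}\Re(p_j\overline{v_j})$, whereas the paper integrates each coupling term separately and then observes the resulting cancellations; your bookkeeping is slightly cleaner but the content is identical.
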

\begin{proof}
First, multiplying \eqref{COMB1-DETAIL} and \eqref{COMB2-DETAIL} by $-2h\overline{v_j'}$ and $-2h\overline{p_j'}$ , taking the real part, we get 
\begin{equation*}\left\{
\begin{array}{l}
\displaystyle 
-2\la^2\rho\Re\left(\int_0^1hv_j\overline{v_j'}dx\right)+2\alpha\xi_j^2\Re\left(\int_0^1hv_j\overline{v_j'}\right)-2\alpha\Re\left(\int_0^1hv_j{''}\overline{v_j'}dx\right)\\[0.1in]
\displaystyle 
+2\gamma\beta\Re\left(\int_0^1h\left(p_j^{''}-\xi_j^2p_j\right)\overline{v_j'}dx\right)+2\Re\left(i\la \int_0^1hdv_j\overline{v_j'}dx\right)=-2\Re\left(\int_0^1hF_j^1\overline{v_j'}dx\right)\\ 
\text{and}\\ 
\displaystyle 
-2\la^2\mu\Re\left(\int_0^1hp_j\overline{p_j'}dx\right)+2\beta\xi_j^2\Re\left(\int_0^1hp_j\overline{p_j'}dx\right)-2\beta\Re\left(\int_0^1hp_j{''}\overline{p_j'}dx\right)\\[0.1in]
\displaystyle
+2\gamma\beta\Re\left(\int_0^1h\left(v_j^{''}-\xi_j^2v_j\right)\overline{p_j'}dx\right)=-2\Re\left(\int_0^1hF_j^2\overline{p_j'}dx\right). 
\displaystyle 	
\end{array}
\right.
\end{equation*}
Using integration by parts in the above equations and taking the sum on $j$ from $1$ to $\infty$, we get 
\begin{equation*}
\begin{array}{l}
\displaystyle 
\sum_{j=1}^{\infty}\int_0^1h'\left(\la^2\rho-\alpha\xi_j^2\right)\abs{v_j}^2dx+\alpha\sum_{j=1}^{\infty}\int_0^1h'\abs{v_j'}^2dx+2\sum_{j=1}^{\infty}\Re\left(i\la \int_0^1hdv_j\overline{v_j'}dx\right)\\
\displaystyle 
+\underbrace{2\gamma\beta \sum_{j=1}^{\infty}\Re\left(\int_0^1h\left(p_j^{''}-\xi_j^2p_j\right)\overline{v_j'}dx\right)}_{:=J_2}=-2\sum_{j=1}^{\infty}\Re\left(\int_0^1hF_j^1\overline{v_j'}dx\right)	
\end{array}
\end{equation*}
and
\begin{equation*}
\sum_{j=1}^{\infty}\int_0^1h'(\la^2\mu-\beta\xi_j^2)\abs{p_j}^2dx+\beta\sum_{j=1}^{\infty}\int_0^1h'\abs{p_j'}^2dx+2\gamma\beta\sum_{j=1}^{\infty}\Re\left(\int_0^1h\left(v_j^{''}-\xi_j^2v_j\right)\overline{p_j'}dx\right)=-2\sum_{j=1}^{\infty}\Re\left(\int_0^1hF_j^2\overline{p_j'}\right).
\end{equation*}
Adding the above two equations, we get 
\begin{equation}\label{EQ2-INFO5-SQUARE}
\begin{array}{l}
\displaystyle 
\sum_{j=1}^{\infty}\int_0^1h'\left(\la^2\rho-\alpha\xi_j^2\right)\abs{v_j}^2dx+\alpha\sum_{j=1}^{\infty}\int_0^1h'\abs{v_j'}^2dx+2\sum_{j=1}^{\infty}\Re\left(i\la \int_0^1hdv_j\overline{v_j'}dx\right)+J_2\\[0.1in]
\displaystyle
+\sum_{j=1}^{\infty}\int_0^1h'(\la^2\mu-\beta\xi_j^2)\abs{p_j}^2dx+\beta\sum_{j=1}^{\infty}\int_0^1h'\abs{p_j'}^2dx+2\gamma\beta\sum_{j=1}^{\infty}\Re\left(\int_0^1h\left(v_j^{''}-\xi_j^2v_j\right)\overline{p_j'}dx\right)\\[0.1in]
\displaystyle 
=-2\sum_{j=1}^{\infty}\Re\left(\int_0^1hF_j^1\overline{v_j'}dx\right)-2\sum_{j=1}^{\infty}\Re\left(\int_0^1hF_j^2\overline{p_j'}\right).	
\end{array}	
\end{equation}
Using integration by parts in $J_2$, we get 
\begin{equation}\label{EQ3-INFO5-SQUARE}
\begin{array}{l}
\displaystyle	
J_2=-2\gamma\beta\sum_{j=1}^{\infty}\Re\left(\int_0^1h'p_j'\overline{v_j'}dx\right)-2\gamma\beta\sum_{j=1}^{\infty}\Re\left(\int_0^1hp_j'\overline{v_j^{''}}dx\right)\\[0.1in]
\displaystyle 
+2\gamma\beta\sum_{j=1}^{\infty}\Re\left(\int_0^1\xi_j^2h'p_j\overline{v_j}dx\right)+2\gamma\beta\sum_{j=1}^{\infty}\Re\left(\int_0^1\xi_j^2hp_j'\overline{v_j}dx\right).
\end{array}
\end{equation}
Inserting \eqref{EQ3-INFO5-SQUARE} in \eqref{EQ2-INFO5-SQUARE} and using the fact that $\alpha=\alpha_1+\gamma^2\beta$, we get 
\begin{equation}\label{EQ4-INFO5-SQUARE}
\begin{array}{l}
\displaystyle 
\sum_{j=1}^{\infty}\int_0^1	h'(\rho \la^2-\alpha_1\xi_j^2)\abs{v_j}^2dx+\alpha_1\sum_{j=1}^{\infty}\int_0^1h'\abs{v_j'}^2dx+2\sum_{j=1}^{\infty}\Re\left(i\la \int_0^Lhdv_j\overline{v_j'}dx\right)\\
\displaystyle 
+\mu\sum_{j=1}^{\infty}\int_0^1h'\abs{\la p_j}^2dx+\underbrace{\gamma^2\beta \sum_{j=1}^{\infty}\int_0^1h'\abs{v_j'}^2dx+\beta\sum_{j=1}^{\infty}\int_0^1h'\abs{p_j'}^2dx-2\gamma\beta\sum_{j=1}^{\infty}\Re\left(\int_0^1h'p_j'\overline{v_j'}dx\right)}_{:=J_3}\\[0.1in]
\displaystyle 
-\underbrace{\left(\gamma^2\beta\sum_{j=1}^{\infty}\int_0^1h'\xi_j^2\abs{v_j}^2dx+\beta\sum_{j=1}^{\infty}\int_0^1h'\xi_j^2\abs{p_j}^2dx-2\gamma\beta\sum_{j=1}^{\infty}\Re\left(\int_0^1\xi_j^2h'p_j\overline{v_j}dx\right)\right)}_{:=J_4}\\[0.1in]
\displaystyle 
=-2\sum_{j=1}^{\infty}\Re\left(\int_0^1hF_j^1\overline{v_j'}dx\right)-2\sum_{j=1}^{\infty}\Re\left(\int_0^1hF_j^2\overline{p_j'}dx\right).	
\end{array}	
\end{equation}
It is clear that 
\begin{equation}\label{EQ5-INFO5-SQUARE}
J_3=\beta \sum_{j=1}^{\infty}\int_0^1h'\abs{\gamma v_j'-p_j'}^2dx \quad \text{and}\quad J_4=\beta \sum_{j=1}^{\infty}\int_0^1h'\xi_j^2\abs{\gamma v_j-p_j}^2dx.
\end{equation}
Inserting \eqref{EQ5-INFO5-SQUARE} in \eqref{EQ4-INFO5-SQUARE}, we get 
\begin{equation}\label{EQ6-INFO5-SQUARE}
\begin{array}{l}
\displaystyle 
\sum_{j=1}^{\infty}\int_0^1	h'(\rho \la^2-\alpha_1\xi_j^2)\abs{v_j}^2dx+\alpha_1\sum_{j=1}^{\infty}\int_0^1h'\abs{v_j'}^2dx+2\sum_{j=1}^{\infty}\Re\left(i\la \int_0^Lhdv_j\overline{v_j'}dx\right)\\
\displaystyle 
+\mu\sum_{j=1}^{\infty}\int_0^1h'\abs{\la p_j}^2dx+\beta \sum_{j=1}^{\infty}\int_0^1h'\abs{\gamma v_j'-p_j'}^2dx-\beta \sum_{j=1}^{\infty}\int_0^1h'\xi_j^2\abs{\gamma v_j-p_j}^2dx\\[0.1in]
\displaystyle 
=-2\sum_{j=1}^{\infty}\Re\left(\int_0^1hF_j^1\overline{v_j'}dx\right)-2\sum_{j=1}^{\infty}\Re\left(\int_0^1hF_j^2\overline{p_j'}dx\right).
\end{array}	
\end{equation}
Using the definition of $F^1$ and $F^2$ in \eqref{F1F2}, and the facts that $\displaystyle{\sum_{j=1}^{\infty}\|v_j'\|^2=O(1)}$, $\displaystyle{\sum_{j=1}^{\infty}\|p_j'\|=O(1)}$ and the fact $\|F\|_{\mathcal{H}_{\square}}=o(1)$, we get 
\begin{equation}\label{EQ7-INFO5-SQUARE}
\left\{\begin{array}{l}
\displaystyle 
\sum_{j=1}^{\infty}\Re\left(\int_0^1hF_j^1\overline{v_j'}dx\right)=\la^{-1}\sum_{j=1}^{\infty}\Re\left(i\rho \int_0^1hf_j^1\overline{v_j'}dx\right)+o(\la^{-2})\\
\displaystyle 
\sum_{j=1}^{\infty}\Re\left(\int_0^1hF_j^2\overline{p_j'}dx\right)=\la^{-1}\sum_{j=1}^{\infty}\Re\left(i\mu \int_0^1hf_j^3\overline{p_j'}dx\right)+o(\la^{-2}).
\end{array}
\right.
\end{equation}
Integrating by parts over $(0,1)$ in the above estimations and using the fact that $\displaystyle{\sum_{j=1}^{\infty}\|\la v_j\|^2=O(1), \sum_{j=1}^{\infty}\|\la p_j\|^2=O(1)}$ and  $\|F\|_{\mathcal{H}_{\square}}=o(1)$ in \eqref{EQ7-INFO5-SQUARE}, we get the following  estimations 
\begin{equation}\label{EQ8-INFO5-SQUARE}
\left\{\begin{array}{l}
\displaystyle 
\sum_{j=1}^{\infty}\Re\left(\int_0^1hF_j^1\overline{v_j'}dx\right)=-\underbrace{\la^{-1}\sum_{j=1}^{\infty}\Re\left(i\rho \int_0^1(h'f_j^1+h(f_j^1)')\overline{v_j}dx\right)}_{=o(\la^{-2})}+o(\la^{-2})=o(\la^{-2})\\[0.1in]
\displaystyle 
\sum_{j=1}^{\infty}\Re\left(\int_0^1hF_j^2\overline{p_j'}dx\right)=-\underbrace{\la^{-1}\sum_{j=1}^{\infty}\Re\left(i\mu \int_0^1(h'f_j^3+h(f_j^3)')\overline{p_j}dx\right)}_{=o(\la^{-2})}+o(\la^{-2})=o(\la^{-2}).
\end{array}
\right.	
\end{equation}
Inserting \eqref{EQ8-INFO5-SQUARE} in \eqref{EQ6-INFO5-SQUARE}, we get the desired estimation \eqref{EQ1-INFO5-SQUARE}. The proof has been completed. 
\end{proof}
\begin{lemma}\label{INFO6-SQUARE}
Let $h\in C^{\infty}([0,1])$ such that $h(0)=h(1)=0$. The solution $(v,z,p,q)$ of system \eqref{1DETAIL} satisfies the following estimation
\begin{equation}\label{EQ1-INFO6-SQUARE}
\begin{array}{l}
\displaystyle 
\sum_{j=1}^{\infty}\int_0^1h'(-\la^2\rho+\alpha_1\xi_j^2)\abs{v_j}^2dx+\alpha_1\sum_{j=1}^{\infty}\int_0^1h'\abs{v_j'}^2dx-\mu\sum_{j=1}^{\infty}\int_0^1h'\abs{\la p_j}^2dx\\[0.1in]
\displaystyle 
+\beta \sum_{j=1}^{\infty}\int_0^1h'\abs{\gamma v_j'-p_j'}^2dx+\beta \sum_{j=1}^{\infty}\int_0^1h'\xi_j^2\abs{\gamma v_j-p_j}^2dx+I=o(\la^{-2}), 
\end{array}
\end{equation}
where 
\begin{equation}\label{I}
\begin{array}{l}
\displaystyle 
I= \alpha\Re\left(\sum_{j=1}^{\infty}\int_0^1h^{''}v_j'\overline{v_j}dx\right)+\beta\Re\left(\sum_{j=1}^{\infty}\int_0^1h^{''}p_j'\overline{p_j}dx\right)\\
\displaystyle 
-\gamma\beta\Re\left(\sum_{j=1}^{\infty}\int_0^1h^{''}p_j'\overline{v_j}dx\right)-\gamma\beta \Re\left(\sum_{j=1}^{\infty}\int_0^1h^{''}v_j'\overline{p_j}dx\right).	
\end{array}
\end{equation}
\end{lemma}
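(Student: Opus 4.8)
\emph{Proof idea.} The plan is to obtain \eqref{EQ1-INFO6-SQUARE} as an exact multiplier identity, following the scheme of Lemma~\ref{INFO5-SQUARE} but replacing the multiplier $h\overline{v_j'}$ (resp.\ $h\overline{p_j'}$) by $h'\overline{v_j}$ (resp.\ $h'\overline{p_j}$). Concretely, I would multiply \eqref{COMB1-DETAIL} by $-h'\overline{v_j}$ and \eqref{COMB2-DETAIL} by $-h'\overline{p_j}$, integrate over $(0,1)$, take real parts, and sum over $j\geq 1$. The boundary conditions $v_j(0)=p_j(0)=v_j'(1)=p_j'(1)=0$, together with the fact that here it is $h'$ (not $h$) that multiplies $v_j''$, $p_j''$, ensure that every boundary term produced by the integrations by parts vanishes.

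After integrating $-\alpha\int_0^1 h' v_j''\overline{v_j}$ and $-\beta\int_0^1 h' p_j''\overline{p_j}$ by parts one gets $\alpha\sum_j\int_0^1 h'|v_j'|^2+\beta\sum_j\int_0^1 h'|p_j'|^2$ together with the $h''$-terms $\alpha\,\Re\sum_j\int_0^1 h''v_j'\overline{v_j}$ and $\beta\,\Re\sum_j\int_0^1 h''p_j'\overline{p_j}$; integrating the coupling terms $\gamma\beta\int_0^1 h'(p_j''-\xi_j^2p_j)\overline{v_j}$ and $\gamma\beta\int_0^1 h'(v_j''-\xi_j^2v_j)\overline{p_j}$ by parts produces the cross quantities $-2\gamma\beta\,\Re\sum_j\int_0^1 h'v_j'\overline{p_j'}$, $-2\gamma\beta\,\Re\sum_j\int_0^1 h'\xi_j^2 v_j\overline{p_j}$ and the remaining $h''$-terms $-\gamma\beta\,\Re\sum_j\int_0^1 h''p_j'\overline{v_j}$, $-\gamma\beta\,\Re\sum_j\int_0^1 h''v_j'\overline{p_j}$. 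I would then invoke $\alpha=\alpha_1+\gamma^2\beta$ to split the coefficients of $|v_j|^2$ and $|v_j'|^2$, recognize
\[\gamma^2\beta|v_j'|^2+\beta|p_j'|^2-2\gamma\beta\,\Re(v_j'\overline{p_j'})=\beta|\gamma v_j'-p_j'|^2,\qquad \gamma^2\beta\xi_j^2|v_j|^2+\beta\xi_j^2|p_j|^2-2\gamma\beta\xi_j^2\,\Re(v_j\overline{p_j})=\beta\xi_j^2|\gamma v_j-p_j|^2,\]
and check that the four surviving $h''$-contributions are precisely $I$ as defined in \eqref{I}; this reconstructs the left-hand side of \eqref{EQ1-INFO6-SQUARE}. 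The one point I would stress is that, with this choice of multiplier, the friction term of \eqref{COMB1-DETAIL} contributes $\Re\left(i\la\sum_j\int_0^1 d\,h'|v_j|^2\,dx\right)$, which is identically $0$ because the integrand is real; this is exactly why, in contrast with \eqref{EQ1-INFO5-SQUARE}, no damping term survives in \eqref{EQ1-INFO6-SQUARE}.

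It then remains to show that the source side $-\sum_j\Re\int_0^1 h'\left(F_j^1\overline{v_j}+F_j^2\overline{p_j}\right)dx$ is $o(\la^{-2})$. For this I would use that $\|U\|_{\mathcal{H}_{\square}}=1$ and $\|F\|_{\mathcal{H}_{\square}}=o(1)$ give, through \eqref{DETAIL1} and \eqref{DETAIL3}, the bounds $\sum_j\|\la v_j\|^2=O(1)$ and $\sum_j\|\la p_j\|^2=O(1)$, hence $\sum_j\|v_j\|^2=O(\la^{-2})$ and $\sum_j\|p_j\|^2=O(\la^{-2})$; on the other hand the expressions \eqref{F1F2}, together with $d\in L^{\infty}(0,1)$ and $\|F\|_{\mathcal{H}_{\square}}=o(1)$, give $\sum_j\|F_j^1\|^2=o(\la^{-2})$ and $\sum_j\|F_j^2\|^2=o(\la^{-2})$. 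A Cauchy--Schwarz estimate then closes the bound and yields \eqref{EQ1-INFO6-SQUARE}.

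I do not anticipate a genuine obstacle here: the identity is purely algebraic once the integrations by parts are bookkept carefully. The only delicate points are (i) tracking the four $h''$-terms and the two families of cross terms so that they collapse exactly to $I$ and to the squares $\beta\|\gamma v_j'-p_j'\|^2$ and $\beta\xi_j^2\|\gamma v_j-p_j\|^2$, and (ii) the small but decisive observation that the dissipation term drops out because $\int_0^1 d\,h'|v_j|^2\,dx$ is real.
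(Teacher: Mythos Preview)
Your proposal is correct and follows essentially the same route as the paper's proof: multiply \eqref{COMB1-DETAIL} and \eqref{COMB2-DETAIL} by $-h'\overline{v_j}$ and $-h'\overline{p_j}$ respectively, integrate by parts, sum over $j$, use $\alpha=\alpha_1+\gamma^2\beta$ to regroup into the squares $\beta|\gamma v_j'-p_j'|^2$ and $\beta\xi_j^2|\gamma v_j-p_j|^2$, and estimate the right-hand side via Cauchy--Schwarz using $\sum_j\|\la v_j\|^2,\sum_j\|\la p_j\|^2=O(1)$ and $\sum_j\|F_j^k\|^2=o(\la^{-2})$. You are in fact more explicit than the paper on two points it leaves implicit: that the boundary contributions vanish thanks to $v_j(0)=p_j(0)=v_j'(1)=p_j'(1)=0$ (the hypothesis $h(0)=h(1)=0$ is not actually needed here), and that the damping term disappears because $\Re\bigl(i\la\int_0^1 d\,h'|v_j|^2\,dx\bigr)=0$.
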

\begin{proof}
Multiplying \eqref{COMB1-DETAIL} by $-h'\overline{v_j}$ integrating by parts over $(0,1)$ and taking the real part, we get 
\begin{equation}\label{EQ2-INFO6-Square}
\begin{array}{l}
\displaystyle 
-\rho\int_0^1h'\abs{\la v_j}^2dx+\alpha\xi_j^2\int_0^1h'\abs{v_j}^2dx+\alpha\Re\left(\int_0^1h^{''}v_j'\overline{v_j}dx\right)+\alpha\int_0^1h'\abs{v_j'}^2dx\\
\displaystyle 
+\gamma\beta\Re\left(\int_0^1(p_j^{''}-\xi_j^2p_j)h'\overline{v_j}dx\right)=-\Re\left(\int_0^1h'F_j^1\overline{v_j}dx\right).
\end{array}	
\end{equation}
Taking the sum on $j=1$ to $\infty$ in the above equation, we get 
\begin{equation}\label{EQ3-INFO6-Square}
\begin{array}{l}
\displaystyle 
-\rho\sum_{j=1}^{\infty}\int_0^1h'\abs{\la v_j}^2dx+\alpha\sum_{j=1}^{\infty}\int_0^1h'\xi_j^2\abs{v_j}^2dx+\alpha\sum_{j=1}^{\infty}\Re\left(\int_0^1h^{''}v_j'\overline{v_j}dx\right)+\alpha\sum_{j=1}^{\infty}\int_0^1h'\abs{v_j'}^2dx\\
\displaystyle 
+\underbrace{\gamma\beta\sum_{j=1}^{\infty}\Re\left(\int_0^1(p_j^{''}-\xi_j^2p_j)h'\overline{v_j}dx\right)}_{:=J_5}=-\sum_{j=1}^{\infty}\Re\left(\int_0^1h'F_j^1\overline{v_j}dx\right).
\end{array}		
\end{equation}
Multiplying \eqref{COMB2-DETAIL} by $-h'\overline{p_j}$ integrating by parts over,  $(0,1)$ taking the real part and summing the result on $j=1$ to $\infty$, we get 
\begin{equation}\label{EQ4-INFO6-Square}
\begin{array}{l}
\displaystyle 
-\mu\sum_{j=1}^{\infty}\int_0^1h'\abs{\la p_j}^2dx+\beta\sum_{j=1}^{\infty}\int_0^1h'\xi_j^2\abs{p_j}^2dx+\beta\sum_{j=1}^{\infty}\Re\left(\int_0^1h^{''}p_j'\overline{p_j}dx\right)+\beta\sum_{j=1}^{\infty}\int_0^1h'\abs{p_j'}^2dx\\
\displaystyle 
+\underbrace{\gamma\beta\sum_{j=1}^{\infty}\Re\left(\int_0^1(v_j^{''}-\xi_j^2v_j)h'\overline{p_j}dx\right)}_{:=J_6}=-\sum_{j=1}^{\infty}\Re\left(\int_0^1h'F_j^2\overline{p_j}dx\right).
\end{array}
\end{equation}
Summing \eqref{EQ3-INFO6-Square} and \eqref{EQ4-INFO6-Square}, we get 
\begin{equation}\label{EQ5-INFO6-Square}
\begin{array}{l}
\displaystyle 
-\rho\sum_{j=1}^{\infty}\int_0^1h'\abs{\la v_j}^2dx+\alpha\sum_{j=1}^{\infty}\int_0^1h'\xi_j^2\abs{v_j}^2dx+\alpha\sum_{j=1}^{\infty}\int_0^1h'\abs{v_j'}^2dx-\mu\sum_{j=1}^{\infty}\int_0^1h'\abs{\la p_j}^2dx\\
\displaystyle
+\beta\sum_{j=1}^{\infty}\int_0^1h'\xi_j^2\abs{p_j}^2dx+\beta\sum_{j=1}^{\infty}\int_0^1h'\abs{p_j'}^2dx+J_5+J_6+\alpha\sum_{j=1}^{\infty}\Re\left(\int_0^1h^{''}v_j'\overline{v_j}dx\right)\\
\displaystyle
+\beta\sum_{j=1}^{\infty}\Re\left(\int_0^1h^{''}p_j'\overline{p_j}dx\right)=-\sum_{j=1}^{\infty}\Re\left(\int_0^1h'F_j^1\overline{v_j}dx\right)-\sum_{j=1}^{\infty}\Re\left(\int_0^1h'F_j^2\overline{p_j}dx\right).
\end{array}	
\end{equation}
Integrating by parts $J_5$ and $J_6$, we get 
\begin{equation*}
\begin{array}{l}
\displaystyle 
J_5= -\gamma\beta\sum_{j=1}^{\infty}\Re\left(\int_0^1h'p_j'\overline{v_j'}dx\right)-\gamma\beta\sum_{j=1}^{\infty}\Re\left(\int_0^1h''p_j'\overline{v_j}dx\right)-\gamma\beta\sum_{j=1}^{\infty}\Re\left(\int_0^1\xi_j^2h'p_j\overline{v_j}dx\right),\\
\displaystyle 
J_6=-\gamma\beta\sum_{j=1}^{\infty}\Re\left(\int_0^1h'v_j'\overline{p_j'}dx\right)-\gamma\beta\sum_{j=1}^{\infty}\Re\left(\int_0^1h''v_j'\overline{p_j}dx\right)-\gamma\beta\sum_{j=1}^{\infty}\Re\left(\int_0^1\xi_j^2h'v_j\overline{p_j}dx\right).	
\end{array}	
\end{equation*}
Inserting the above two equations in \eqref{EQ5-INFO6-Square}, we get
\begin{equation}\label{EQ6-INFO6-Square}
\begin{array}{l}
\displaystyle 
-\rho\sum_{j=1}^{\infty}\int_0^1h'\abs{\la v_j}^2dx+\alpha\sum_{j=1}^{\infty}\int_0^1h'\xi_j^2\abs{v_j}^2dx+\alpha\sum_{j=1}^{\infty}\int_0^1h'\abs{v_j'}^2dx-\mu\sum_{j=1}^{\infty}\int_0^1h'\abs{\la p_j}^2dx\\
\displaystyle 
+\beta\sum_{j=1}^{\infty}\int_0^1h'\xi_j^2\abs{p_j}^2dx+\beta\sum_{j=1}^{\infty}\int_0^1h'\abs{p_j'}^2dx-2\gamma\beta\sum_{j=1}^{\infty}\Re\left(\int_0^1h'p_j'\overline{v_j'}dx\right)+I\\
\displaystyle
-2\gamma\beta\sum_{j=1}^{\infty}\Re\left(\int_0^1\xi_j^2h'p_j\overline{v_j}dx\right)=-\sum_{j=1}^{\infty}\Re\left(\int_0^1h'F_j^1\overline{v_j}dx\right)-\sum_{j=1}^{\infty}\Re\left(\int_0^1h'F_j^2\overline{p_j}dx\right),
\end{array}	
\end{equation}
where $I$ is defined in \eqref{I}. Using the fact that $\alpha=\alpha_1+\gamma^2\beta$ in \eqref{EQ6-INFO6-Square}, we get 
\begin{equation}\label{EQ7-INFO6-Square}
\begin{array}{l}
\displaystyle 
\sum_{j=1}^{\infty}\int_0^1h'(-\la^2\rho+\alpha_1\xi_j^2)\abs{v_j}^2dx+\alpha_1\sum_{j=1}^{\infty}\int_0^1h'\abs{v_j'}^2dx-\mu\sum_{j=1}^{\infty}\int_0^1h'\abs{\la p_j}^2dx\\
\displaystyle 
\underbrace{+\gamma^2\beta\sum_{j=1}^{\infty}\int_0^1h'\abs{v_j'}^2dx+\beta\sum_{j=1}^{\infty}\int_0^1h'\abs{p_j'}^2dx-2\gamma\beta\sum_{j=1}^{\infty}\Re\left(\int_0^1h'p_j'\overline{v_j'}dx\right)}_{:=J_7}\\
\displaystyle 
\underbrace{+\gamma^2\beta\sum_{j=1}^{\infty}\int_0^1h'\xi_j^2\abs{v_j}^2dx+\beta\sum_{j=1}^{\infty}\int_0^1h'\xi_j^2\abs{p_j}^2dx-2\gamma\beta\sum_{j=1}^{\infty}\Re\left(\int_0^1\xi_j^2h'p_j\overline{v_j}dx\right)}_{:=J_8}\\
\displaystyle 
+I=-\sum_{j=1}^{\infty}\Re\left(\int_0^1h'F_j^1\overline{v_j}dx\right)-\sum_{j=1}^{\infty}\Re\left(\int_0^1h'F_j^2\overline{p_j}dx\right).
\end{array}	
\end{equation}
It is easy to check that 
\begin{equation}\label{EQ8-INFO6-Square}
J_7=\beta \sum_{j=1}^{\infty}\int_0^1h'\abs{\gamma v_j'-p_j'}^2dx\quad \text{and}\quad J_8=\beta \sum_{j=1}^{\infty}\int_0^1h'\xi_j^2\abs{\gamma v_j-p_j}^2dx. 	
\end{equation}
Using Cauchy-Schwarz inequality and the facts $\displaystyle{\sum_{j=1}^{\infty}\|\la v_j\|^2=O(1)}$, $\displaystyle{\sum_{j=1}^{\infty}\|\la p_j\|^2=O(1)}$, the definition of $F^1$ and $F^2$ given by \eqref{F1F2}, and $\|F\|_{\mathcal{H}_{\square}}=o(1)$, we get 
\begin{equation}\label{EQ9-INFO6-Square}
\left|\sum_{j=1}^{\infty}\Re\left(\int_0^1h'F_j^1\overline{v_j}dx\right)\right|=o(\la^{-2})\quad \text{and}\quad \left|\sum_{j=1}^{\infty}\Re\left(\int_0^1h'F_j^2\overline{p_j}dx\right)\right|=o(\la^{-2}). 
\end{equation}
Inserting \eqref{EQ8-INFO6-Square} and \eqref{EQ9-INFO6-Square} in \eqref{EQ7-INFO6-Square}, we get the desired result \eqref{EQ1-INFO6-SQUARE}. The proof has been completed. 
\end{proof}
 
$\newline$
\noindent For all $0<\varepsilon<\frac{b-a}{4}$, we fix the following cut-off functions 
\begin{enumerate}
\item[$\bullet$] $h_1,h_2\in C^2([0,1])$ such that $0\leq h_1(x)\leq 1$, $0\leq h_2(x)\leq 1$, for all $x\in [0,1]$ and 
\end{enumerate}
\begin{equation*}
h_1=\left\{\begin{array}{lll}
1&\text{if}&x\in [0,a+2\varepsilon]\\
0&\text{if}&x\in [b-2\varepsilon,1],	
\end{array}
\right.
\quad \text{and}\quad  
h_2=\left\{\begin{array}{lll}
0&\text{if}&x\in [0,a+2\varepsilon]\\
1&\text{if}&x\in [b-2\varepsilon,1].	
\end{array}	
\right.
\end{equation*}

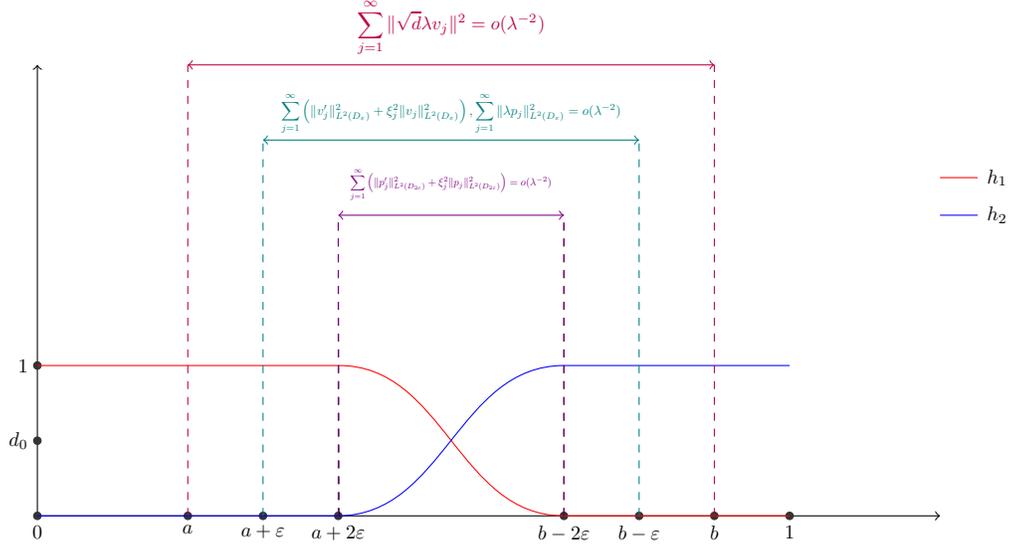
\begin{figure}[h]
\begin{center}
\begin{tikzpicture}
\draw[->](0,0)--(12,0);
\draw[->](0,0)--(0,6);
\node[black,below] at (0,0){\scalebox{0.75}{$0$}};
\node at (0,0) [circle, scale=0.3, draw=black!80,fill=black!80] {};
\node[black,below] at (2,0){\scalebox{0.75}{$a$}};
\node at (2,0) [circle, scale=0.3, draw=black!80,fill=black!80] {};	
\node[black,below] at (3,0){\scalebox{0.75}{$a+\varepsilon$}};
\node at (3,0) [circle, scale=0.3, draw=black!80,fill=black!80] {};
\node[black,below] at (4,0){\scalebox{0.75}{$a+2\varepsilon$}};
\node at (4,0) [circle, scale=0.3, draw=black!80,fill=black!80] {};
\node[black,below] at (7,0){\scalebox{0.75}{$b-2\varepsilon$}};
\node at (7,0) [circle, scale=0.3, draw=black!80,fill=black!80] {};
\node[black,below] at (8,0){\scalebox{0.75}{$b-\varepsilon$}};
\node at (8,0) [circle, scale=0.3, draw=black!80,fill=black!80] {};
\node[black,below] at (9,0){\scalebox{0.75}{$b$}};
\node at (9,0) [circle, scale=0.3, draw=black!80,fill=black!80] {};
\node[black,below] at (10,0){\scalebox{0.75}{$1$}};
\node at (10,0) [circle, scale=0.3, draw=black!80,fill=black!80] {};
\node[black,left] at (0,2){\scalebox{0.75}{$1$}};
\node at (0,2) [circle, scale=0.3, draw=black!80,fill=black!80] {};
\node[black,left] at (0,1){\scalebox{0.75}{$d_0$}};
\node at (0,1) [circle, scale=0.3, draw=black!80,fill=black!80] {};	
\draw[-,red] (0,2)--(4,2);
\draw[red] (4,2) to[out=0.40,in=180] (7,0);
\draw[-,red] (7,0)--(10,0);
\draw[-,blue](0,0)--(4,0);
\draw[blue] (4,0) to[out=0.40,in=180] (7,2);
\draw[-,blue] (7,2)--(10,2);
\draw[dashed] (4.01,0)--(4,4);
\draw[dashed] (7,0)--(7,4);
\draw[<->,purple!100](2,6)--(9,6);
\node[purple,below] at (5.5,7){\scalebox{0.7}{$\displaystyle{\sum_{j=1}^{\infty}\|\sqrt{d}\la v_j\|^2=o(\la^{-2})}$}};
\draw[dashed,purple] (2,0)--(2,6);
\draw[dashed,purple] (9,0)--(9,6);
\draw[<->,teal](3,5)--(8,5);
\node[teal,below] at (5.5,5.75) {\scalebox{0.5}{$\displaystyle{\sum_{j=1}^{\infty}\left(\|v_j'\|^2_{L^2(D_{\varepsilon})}+\xi_j^2\|v_j\|^2_{L^2(D_{\varepsilon})}\right),\sum_{j=1}^{\infty}\|\la p_j\|^2_{L^2(D_{\varepsilon})}=o(\la^{-2})}$}};
\draw[dashed,teal] (3,0)--(3,5);
\draw[dashed,teal] (8,0)--(8,5);
\draw[<->,violet](4,4)--(7,4);
\node[violet,below] at (5.5,4.75) {\scalebox{0.4}{$\displaystyle{\sum_{j=1}^{\infty}\left(\|p_j'\|^2_{L^2(D_{2\varepsilon})}+\xi_j^2\|p_j\|^2_{L^2(D_{2\varepsilon})}\right)=o(\la^{-2})}$}};
\draw[dashed,violet] (4,0)--(4,4);
\draw[dashed,violet] (7,0)--(7,4);
\node[black,right] at (12.5,4.5){\scalebox{0.75}{$h_1$}};
\node[black,right] at (12.5,4){\scalebox{0.75}{$h_2$}};
\draw[-,red](12,4.5)--(12.5,4.5);
\draw[-,blue](12,4)--(12.5,4);
\end{tikzpicture}
\caption{Geometric description of the functions $h_1$, $h_2$.}
\end{center}
\end{figure}

\begin{lemma}\label{INFO7-SQUARE}
The solution $(v,z,p,q)$ of system \eqref{1DETAIL} satisfies the following estimation
\begin{equation}\label{EQ1-INFO7-SQUARE}
\|U\|_{\mathcal{H}_{\square}}=o(1). 	
\end{equation}	
\end{lemma}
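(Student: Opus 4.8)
Since $\|U^{(n)}\|_{\mathcal{H}_{\square}}=1$ by \eqref{Cond-Pol1}, establishing \eqref{EQ1-INFO7-SQUARE} yields the contradiction $1=o(1)$; this proves condition \eqref{Condition2} with $\ell=2$, so that, together with Proposition \ref{First-condition-pol}, Theorem \ref{GENN} gives the estimate \eqref{ENE-EST1}, i.e.\ Theorem \ref{Thm-Pol}. Thus the whole task reduces to bounding
\[
\|U\|_{\mathcal{H}_{\square}}^2=\sum_{j=1}^{\infty}\Big(\alpha_1\|v_j'\|^2+\alpha_1\xi_j^2\|v_j\|^2+\rho\|z_j\|^2+\beta\|\gamma v_j'-p_j'\|^2+\beta\xi_j^2\|\gamma v_j-p_j\|^2+\mu\|q_j\|^2\Big)
\]
by splitting $(0,1)=\overline{(0,a+2\varepsilon)}\cup\overline{D_{2\varepsilon}}\cup\overline{(b-2\varepsilon,1)}$ and proving each of the three contributions is $o(1)$. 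As a first reduction, multiplying \eqref{COMB1-DETAIL} by $\overline{v_j}$ and \eqref{COMB2-DETAIL} by $\overline{p_j}$, integrating over $(0,1)$, cancelling every boundary term via \eqref{COMB7-DETAIL}, using $\alpha=\alpha_1+\gamma^2\beta$, and \eqref{DETAIL1}, \eqref{DETAIL3} together with $\|F\|_{\mathcal{H}_{\square}}=o(1)$ and $\sum_j(\|\la v_j\|^2+\|\la p_j\|^2)=O(1)$ to replace $z_j,q_j$ by $i\la v_j,i\la p_j$ up to $o(\la^{-2})$, one gets the equipartition identity $\sum_j(\rho\|z_j\|^2+\mu\|q_j\|^2)=\sum_j(\alpha_1\|v_j'\|^2+\alpha_1\xi_j^2\|v_j\|^2+\beta\|\gamma v_j'-p_j'\|^2+\beta\xi_j^2\|\gamma v_j-p_j\|^2)+o(\la^{-2})$. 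Hence $\|U\|_{\mathcal{H}_{\square}}^2$ equals twice the sum of the potential terms up to $o(\la^{-2})$, so it is enough to make the potential energy $o(1)$ on each zone.

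On the damping slab, collecting Lemmas \ref{INFO1-SQUARE}--\ref{INFO4-SQUARE} with \eqref{DETAIL1}, \eqref{DETAIL3} (and $d\geq d_0$ on $(a,b)\supset D_{2\varepsilon}$, which upgrades the first bound of Lemma \ref{INFO1-SQUARE} to $\sum_j\|\la v_j\|^2_{L^2(D_{2\varepsilon})}=o(\la^{-2})$), every one of the six quantities above, restricted to $D_{2\varepsilon}$ and summed over $j$, is $o(\la^{-2})$; in particular, by one-dimensional trace inequalities on a subinterval of $D_{2\varepsilon}$ abutting $a+2\varepsilon$ (resp.\ $b-2\varepsilon$), the corresponding pointwise sums $\sum_j(\|\la v_j\|^2+\|\la p_j\|^2+\|v_j'\|^2+\cdots)$ evaluated at the two interfaces are $o(1)$.

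For the end strips, apply on $(0,a+2\varepsilon)$ the multiplier identities of Lemmas \ref{INFO5-SQUARE} and \ref{INFO6-SQUARE} with the weight $h=x\,h_1$ (so $h(0)=h(1)=0$, $h'\equiv 1$ on $[0,a+2\varepsilon]$, while $h'$ and $h''$ are supported in $D_{2\varepsilon}$), and likewise $h=(x-1)h_2$ on $(b-2\varepsilon,1)$. Adding the two identities, the indefinite ($\la^2$- and $\xi_j^2$-dependent) terms cancel; since every term supported in $D_{2\varepsilon}$ is $o(\la^{-2})$ by the previous paragraph, the $d$-weighted term is $o(\la^{-1})$ by Lemma \ref{INFO1-SQUARE}, and the $F$- and $I$-terms are $o(\la^{-2})$, one obtains $\sum_j\|v_j'\|^2_{L^2(0,a+2\varepsilon)}=o(\la^{-1})$ and $\sum_j\|\gamma v_j'-p_j'\|^2_{L^2(0,a+2\varepsilon)}=o(\la^{-1})$ (hence also $\sum_j\|p_j'\|^2_{L^2(0,a+2\varepsilon)}=o(\la^{-1})$), and symmetrically on $(b-2\varepsilon,1)$. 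It then remains to control, on each end strip, $\sum_j\|\la v_j\|^2$, $\sum_j\|\la p_j\|^2$, $\sum_j\xi_j^2\|v_j\|^2$, $\sum_j\xi_j^2\|p_j\|^2$ and the cross term $\sum_j\xi_j^2\Re\int v_j\overline{p_j}$: for this, multiply \eqref{COMB1-DETAIL}, \eqref{COMB2-DETAIL} (equivalently \eqref{COMB5-DETAIL}, \eqref{COMB6-DETAIL}) by $h_1\overline{v_j}$, $h_1\overline{p_j}$ and by the crossed multipliers $h_1\overline{p_j}$, $h_1\overline{v_j}$, integrate over $(0,1)$, take the real part and sum over $j$; all terms carrying $h_1'$ or $h_1''$ are supported in $D_{2\varepsilon}$ and therefore $o(\la^{-2})$, the forcing terms are $o(\la^{-2})$, and — using the derivative bounds just proved — each identity collapses to a linear relation among the five quantities above on $(0,a+2\varepsilon)$. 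Combining these relations with the equipartition identity and exploiting $\alpha>\gamma^2\beta$ (i.e.\ $\alpha_1>0$), which makes the resulting linear system nondegenerate, forces all five to be $o(1)$; the same holds on $(b-2\varepsilon,1)$.

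Adding the $D_{2\varepsilon}$-estimate and the two end-strip estimates, and using the equipartition identity once more, gives $\|U\|_{\mathcal{H}_{\square}}^2=o(1)$, contradicting $\|U\|_{\mathcal{H}_{\square}}=1$. The main obstacle is the propagation step: the strip multiplier identities of Lemmas \ref{INFO5-SQUARE} and \ref{INFO6-SQUARE} by themselves only deliver an \emph{equipartition}-type relation ("kinetic $\simeq$ potential") on the end strips, so one must assemble enough genuinely independent multiplier identities — and keep every constant uniform in the mode index $j$ so the series converge — to pin the kinetic and the $\xi_j$-weighted potential energies down separately, the nondegeneracy of the resulting system resting on $\alpha>\gamma^2\beta$; moreover the boundary contributions created at the interfaces $x=a+2\varepsilon$ and $x=b-2\varepsilon$ by the cut-off functions have to be absorbed through trace inequalities anchored in $D_{2\varepsilon}$, and the power of $\la$ lost there is exactly what yields the $t^{-1}$ (rather than exponential) decay.
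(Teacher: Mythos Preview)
Your argument has a genuine gap at the propagation step. After adding Lemmas~\ref{INFO5-SQUARE} and \ref{INFO6-SQUARE} with $h=xh_1+(x-1)h_2$, you estimate the damping term $2\sum_j\Re\bigl(i\lambda\int_0^1 h\,d\,v_j\overline{v_j'}\,dx\bigr)$ by Cauchy--Schwarz and obtain only $o(\lambda^{-1})$ for $\sum_j\|v_j'\|^2$ on the end strips. The paper instead uses Young's inequality to \emph{absorb} this term: one half goes into $\tfrac{\alpha_1}{2}\sum_j\int(h_1+h_2)|v_j'|^2$, the other half is bounded by $\sum_j\int d|\lambda v_j|^2=o(\lambda^{-2})$. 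This yields $\sum_j\|v_j'\|^2_{L^2(0,1)}=o(\lambda^{-2})$ and $\sum_j\|\gamma v_j'-p_j'\|^2_{L^2(0,1)}=o(\lambda^{-2})$. Poincar\'e's inequality (from $v_j(0)=p_j(0)=0$) then gives $\sum_j\|\lambda v_j\|^2=o(1)$ and $\sum_j\|\lambda p_j\|^2=o(1)$ directly, and substituting these back into Lemma~\ref{INFO6-SQUARE} forces the $\xi_j$-weighted potential terms to be $o(1)$. No linear system is needed.

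Your fallback---a ``nondegenerate linear system in five quantities''---cannot close. First, there are \emph{six} bilinear unknowns on $(0,a+2\varepsilon)$, not five: you omitted $E_\lambda:=\sum_j\lambda^2\Re\int h_1 v_j\overline{p_j}$, which necessarily appears in the cross-multiplier identities \eqref{COMB1-DETAIL}$\times h_1\overline{p_j}$ and \eqref{COMB2-DETAIL}$\times h_1\overline{v_j}$. Second, the four multiplications \eqref{COMB1-DETAIL},\eqref{COMB2-DETAIL}$\times h_1\overline{v_j},h_1\overline{p_j}$ produce at most four relations, and the identity coming from Lemmas~\ref{INFO5-SQUARE}--\ref{INFO6-SQUARE} restricted to the strip is exactly the sum of two of them, hence redundant; the global equipartition identity adds nothing local. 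A direct computation shows the resulting $4\times 6$ homogeneous system has a two-dimensional kernel (parametrised by $\sum_j\|\lambda v_j\|^2$ and $\sum_j\|\lambda p_j\|^2$) for \emph{every} choice of parameters with $\alpha_1>0$, so the condition $\alpha>\gamma^2\beta$ does not make it determined. The missing idea is precisely the Young-inequality absorption that upgrades $o(\lambda^{-1})$ to $o(\lambda^{-2})$; once you have that, Poincar\'e finishes the job and the whole linear-system apparatus is unnecessary.
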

\begin{proof}
First, adding \eqref{EQ1-INFO5-SQUARE} and \eqref{EQ1-INFO6-SQUARE}, we get 
\begin{equation}\label{EQ2-INFO7-SQUARE}
2\alpha_1\sum_{j=1}^{\infty}\int_0^1h'\abs{v_j'}^2dx+2\beta \sum_{j=1}^{\infty}\int_0^1h'\abs{\gamma v_j'-p_j'}^2dx+2\sum_{j=1}^{\infty}\Re\left(i\la \int_0^1hdv_j\overline{v_j'}dx\right)+I=o(\la^{-2}).	
\end{equation}
Now, take $h(x)=xh_1+(x-1)h_2$. It is easy to see that 
\begin{equation}\label{h'h''}
h'(x)=xh_1'+h_1+(x-1)h_2'+h_2\quad \text{and}\quad h^{''}=xh_1^{''}+2h_1^{'}+(x-1)h_2^{''}+2h_2'. 	
\end{equation}
The aim now is to give an estimation for $I$. We start by using Cauchy-Schwarz inequality and \eqref{h'h''}, \eqref{EQ1-INFO1-SQUARE}, \eqref{EQ1-INFO2-SQUARE}, \eqref{EQ1-INFO3-SQUARE}, \eqref{EQ1-INFO4-SQUARE}, to get  
\begin{equation*}
\left\{\begin{array}{ll}
\displaystyle 
\left|\Re\left(\sum_{j=1}^{\infty}\int_0^1h^{''}v_j'\overline{v_j}dx\right)\right|=o(\la^{-3}),&\displaystyle 
\left|\Re\left(\sum_{j=1}^{\infty}\int_0^1h^{''}p_j'\overline{p_j}dx\right)\right|=o(\la^{-3})\\
\displaystyle 
\left|\Re\left(\sum_{j=1}^{\infty}\int_0^1h^{''}p_j'\overline{v_j}dx\right)\right|=o(\la^{-3}),&
\displaystyle
\left|\Re\left(\sum_{j=1}^{\infty}\int_0^1h^{''}v_j'\overline{p_j}dx\right)\right|=o(\la^{-3}).
\end{array}
\right.
\end{equation*}
Using the above estimations and \eqref{I}, we get 
\begin{equation}\label{EQ3-INFO7-SQUARE}
\abs{I}=o(\la^{-3}). 	
\end{equation}
Setting $\tilde{h}=xh_1'+(x-1)h_2'$, and using  \eqref{EQ1-INFO3-SQUARE}, \eqref{EQ1-INFO4-SQUARE}, we get 
$$
\sum_{j=1}^{\infty}\int_0^1\tilde{h}\abs{v_j'}^2dx=o(\la^{-2})\quad \text{and}\quad  \sum_{j=1}^{\infty}\int_0^1\tilde{h}\abs{\gamma v_j'-p_j'}^2dx=o(\la^{-2}).
$$
These estimations, \eqref{EQ3-INFO7-SQUARE}, \eqref{h'h''} and \eqref{EQ2-INFO7-SQUARE}, yield 
\begin{equation}\label{EQ4-INFO7-SQUARE}
\alpha_1\sum_{j=1}^{\infty}\int_0^1(h_1+h_2)\abs{v_j'}^2dx+\beta \sum_{j=1}^{\infty}\int_0^1(h_1+h_2)\abs{\gamma v_j'-p_j'}^2dx+\sum_{j=1}^{\infty}\Re\left(i\la \int_0^1hdv_j\overline{v_j'}dx\right)=o(\la^{-2}).	
\end{equation}
Using Young inequality and the definitions of the functions $d$ given by \eqref{dsquare}, $h$ given at the beginning of the proof of this Lemma and \eqref{EQ1-INFO1-SQUARE},  we get 
\begin{equation}\label{EQ5-INFO7-SQUARE}
\begin{array}{rll}
\displaystyle 
\left|\sum_{j=1}^{\infty}\Re\left(i\la \int_0^1hdv_j\overline{v_j'}dx\right)\right|&\leq &\displaystyle
\la\sum_{j=1}^{\infty}\int_0^1(h_1+h_2)d\abs{v_j}\abs{v_j'}dx\\
&\leq &\displaystyle 
\la \|\sqrt{d}\|_{\infty}\sum_{j=1}^{\infty}\int_0^1(h_1+h_2)\sqrt{d}
\abs{v_j}\abs{v_j'}dx\\
&\leq &\displaystyle 
\frac{\alpha_1}{2}\sum_{j=1}^{\infty}\int_0^1(h_1+h_2)\abs{v_j'}^2dx+\frac{ \|\sqrt{d}\|_{\infty}^2}{2\alpha_1}\sum_{j=1}^{\infty}\int_0^1(h_1+h_2)d\abs{\la v_j}^2dx\\
&\leq &\displaystyle 
\frac{\alpha_1}{2}\sum_{j=1}^{\infty}\int_0^1(h_1+h_2)\abs{v_j'}^2dx+\frac{ \|\sqrt{d}\|_{\infty}^2}{\alpha_1}\sum_{j=1}^{\infty}\int_0^1d\abs{\la v_j}^2dx\\
&\leq &\displaystyle 
\frac{\alpha_1}{2}\sum_{j=1}^{\infty}\int_0^1(h_1+h_2)\abs{v_j'}^2dx+o(\la^{-2}).
\end{array}	
\end{equation}
Inserting \eqref{EQ5-INFO7-SQUARE} in \eqref{EQ4-INFO7-SQUARE}, we get 
\begin{equation}\label{EQ6-INFO7-SQUARE}
\frac{\alpha_1}{2}\sum_{j=1}^{\infty}\int_0^1(h_1+h_2)\abs{v_j'}^2dx+\beta \sum_{j=1}^{\infty}\int_0^1(h_1+h_2)\abs{\gamma v_j'-p_j'}^2dx\leq o(\la^{-2}).	
\end{equation}
Using the definition of functions $h_1$ and $h_2$ and \eqref{EQ1-INFO3-SQUARE}, \eqref{EQ1-INFO4-SQUARE}, we get 
\begin{equation}\label{EQ6-INFO7-SQUARE}
\sum_{j=1}^{\infty}\int_0^1\abs{v_j'}^2dx=o(\la^{-2})\quad \text{and}\quad \sum_{j=1}^{\infty}\int_0^1\abs{\gamma v_j'-p_j'}^2dx=o(\la^{-2}). 
\end{equation}
From \eqref{EQ6-INFO7-SQUARE}, we get 
\begin{equation}\label{EQ7-INFO7-SQUARE}
\sum_{j=1}^{\infty}\int_0^1\abs{p_j'}^2dx\leq 2\sum_{j=1}^{\infty}\int_0^1\abs{\gamma v_j'-p_j'}^2dx+2\gamma^2\sum_{j=1}^{\infty}\int_0^1\abs{v_j'}^2dx\leq o(\la^{-2}).
\end{equation}
Using Poincar\'e inequality,  \eqref{EQ6-INFO7-SQUARE} and \eqref{EQ7-INFO7-SQUARE}, we get 
\begin{equation}\label{EQ8-INFO7-SQUARE}
\sum_{j=1}^{\infty}\int_0^1\abs{\la v_j}^2dx=o(1)\quad \text{and}\quad \sum_{j=1}^{\infty}\int_0^1\abs{\la p_j}^2dx=o(1). 	
\end{equation}
Using \eqref{EQ1-INFO6-SQUARE}, \eqref{EQ3-INFO7-SQUARE}, \eqref{EQ8-INFO7-SQUARE}, \eqref{EQ6-INFO7-SQUARE}, \eqref{EQ1-INFO3-SQUARE} and \eqref{EQ1-INFO4-SQUARE}, we get 
\begin{equation}\label{EQ9-INFO7-SQUARE}
\sum_{j=1}^{\infty}\int_0^1\xi_j^2\abs{v_j'}^2dx=o(1)\quad\text{and}\quad\sum_{j=1}^{\infty}\int_0^1\xi_j^2\abs{\gamma v_j-p_j}^2dx=o(1).	
\end{equation}
Finally, from \eqref{EQ6-INFO7-SQUARE}-\eqref{EQ9-INFO7-SQUARE}, we obtain \eqref{EQ1-INFO7-SQUARE}. The proof has been completed. 
\end{proof}

$\newline$
\noindent \textbf{Proof of Theorem \ref{Thm-Pol}.} From Lemma \ref{INFO7-SQUARE}, we have $\|U\|_{\mathcal{H}_{\square}}=o(1)$, which contradicts $\|U\|_{\mathcal{H}_{\square}}=1$ in \eqref{Cond-Pol1}. This implies that 
$$
\limsup_{\la\in \R,\ |\la|\to \infty}\frac{1}{\abs{\la}^{2}}\|(i\la I-A)^{-1}\|_{\mathcal{L}(H)}<\infty. 
$$
Finally, according to Theorem \ref{GENN}, we obtain the desired result. The proof has been completed.  

\section{Conclusion and open problems}
\noindent In this work, the local stabilization of 2D Pieozoelectric beam with magnetic effect on a rectangular domain without geometric conditions is considered.  The localized damping  regions do not satisfy the geometric control condition (GCC). Based on the frequency domain approach with the orthonormal basis decomposition and specific multiplier techniques, we have proved a polynomial energy decay rate of order $t^{-1}$. The  open problems that will be posed in this paper are:
\\
\begin{enumerate}
\item[${\rm (OP1)}$] What happens about the optimality of the polynomial energy decay rate?\\ 
\item[${\rm (OP2)}$] The case where the  damping region does not hit the boundary is still an open problem (See Figure \ref{Fig5}).
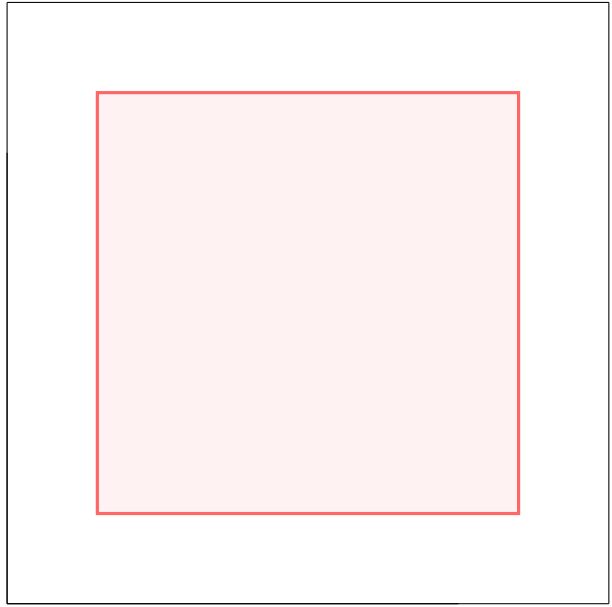
\begin{figure}[h!]
	\begin{center}
		\begin{tikzpicture}
			\draw[-](0,0)--(6,0);
			\draw[-](0,0)--(0,6);
			
			\draw[-,black](0,0)--(0,8);		
			\draw[-,black](0,0)--(8,0);
			\draw[-,black](8,0)--(8,8);
			\draw[-,black](0,8)--(8,8);				
			\draw[-,red](1.2,1.2)--(1.2,6.8);		
			\draw[-,red](1.2,6.8)--(6.8,6.8);	
			\draw[-,red](6.8,6.8)--(6.8,1.2);		
			\draw[-,red](6.8,1.2)--(1.2,1.2);
			\filldraw[color=red!60, fill=red!5, very thick] (1.2,1.2) rectangle (6.8,6.8);	
		\end{tikzpicture}
	\end{center}
	\caption{{\color{black}The case where the damping region (red part) is far away from the boundary.}}\label{Fig5}
\end{figure}

\item[${\rm (OP3)}$] The case where $\Omega$ is an open bounded regular domain (for example of class $C^2$) and the localised damping region does not satisfy the geometric control condition  (GCC) condition is still an open problem. For example, (See Figure \ref{ANNULUS}) 
$$
\left\{\begin{array}{l}
\displaystyle 
\Omega=\left\{X=(x,y)\in \mathbb{R}^2;\quad  r_1<\|X\|_{Euc}=\sqrt{x^2+y^2}<r_4\right\},\\[0.15in]
\displaystyle 
\Gamma_{0}=\left\{X=(x,y)\in \mathbb{R}^2;\quad  \|X\|_{Euc}=r_1\right\},\\[0.15in]
\displaystyle 
\Gamma_{1}=\left\{X=(x,y)\in \mathbb{R}^2;\quad  \|X\|_{Euc}=r_4\right\}.	
\end{array}
\right.
$$
and $d$ be a radial function, defined by 
\begin{equation*}
d(r)\geq d_1>0\ \text{in}\ \omega\quad \text{and}\quad d(r)=0\ \text{in}\ \Omega\backslash \omega, 
\end{equation*}
where $\omega:=\left\{X=(x,y)\in \mathbb{R}^2;\quad r_2<\|X\|_{Euc}=\sqrt{x^2+y^2}<r_3\right\}$ and $0<r_1<r_2<r_3<r_4$. \\
\item[${\rm (OP4)}$] The case where $\Omega$ is an open bounded regular domain (for example of class $C^2$) and the localised damping region satisfies the geometric control condition (GCC) condition is still an open problem. 
\end{enumerate}
\newpage

\begin{figure}[h!]
\begin{tikzpicture}
\draw[color=blue!60, fill=blue!5, very thick,dashed] (0,0) circle (4cm);
\draw[color=red!60, fill=red!5, very thick,dashed] (0,0) circle (3cm);
\draw[color=red!60, fill=blue!5, very thick,dashed] (0,0) circle (2cm);
\draw[color=purple!60, fill=white!5, very thick,dashed] (0,0) circle (1cm); 
\node[blue!60,left] at (4.2,0){\scalebox{1}{$\bullet$}};
\node[black] at (4.2,-0.15) {\scalebox{1}{$r_4$}};
\node[red!60,left] at (3.1999,0){\scalebox{1}{$\bullet$}};
\node[black] at (3.2,-0.15) {\scalebox{1}{$r_3$}};
\node[red!60,left] at (2.1999,0){\scalebox{1}{$\bullet$}};
\node[black] at (2.2,-0.15) {\scalebox{1}{$r_2$}};
\node[purple!60,left] at (1.1999,0){\scalebox{1}{$\bullet$}};
\node[black] at (1.2,-0.15) {\scalebox{1}{$r_1$}};
\node[black!60,left] at (2,1.5){\scalebox{1}{$\omega$}};
\node[blue] at (3,3.25){\scalebox{1}{$\Gamma_{1}$}};
\node[purple] at (0.5,0.5){\scalebox{1}{$\Gamma_{0}$}};
\node[blue!60,left] at (-3.8,0){\scalebox{1}{$\bullet$}};
\node[black] at (-4.3,-0.15) {\scalebox{1}{$-r_4$}};
\node[red!60,left] at (-2.8,0){\scalebox{1}{$\bullet$}};
\node[black] at (-3.3,-0.15) {\scalebox{1}{$-r_3$}};
\node[red!60,left] at (-1.8,0){\scalebox{1}{$\bullet$}};
\node[black] at (-2.3,-0.15) {\scalebox{1}{$-r_2$}};
\node[purple!60,left] at (-0.8,0){\scalebox{1}{$\bullet$}};
\node[black] at (-1.3,-0.15) {\scalebox{1}{$-r_1$}};
\node[blue!60,left] at (0.2,4){\scalebox{1}{$\bullet$}};
\node[black] at (0.15,4.2) {\scalebox{1}{$r_4$}};
\node[red!60,left] at (0.2,3){\scalebox{1}{$\bullet$}};
\node[black] at (0.2,3.2) {\scalebox{1}{$r_3$}};
\node[red!60,left] at (0.2,2){\scalebox{1}{$\bullet$}};
\node[black] at (0.2,2.2) {\scalebox{1}{$r_2$}};
\node[purple!60,left] at (0.2,1){\scalebox{1}{$\bullet$}};
\node[black] at (0.2,1.2) {\scalebox{1}{$r_1$}};
\node[blue!60,left] at (0.2,-4){\scalebox{1}{$\bullet$}};
\node[black] at (0.35,-4.2) {\scalebox{1}{$-r_4$}};
\node[red!60,left] at (0.2,-3){\scalebox{1}{$\bullet$}};
\node[black] at (0.4,-3.2) {\scalebox{1}{$-r_3$}};
\node[red!60,left] at (0.2,-2){\scalebox{1}{$\bullet$}};
\node[black] at (0.4,-2.2) {\scalebox{1}{$-r_2$}};
\node[purple!60,left] at (0.2,-1){\scalebox{1}{$\bullet$}};
\node[black] at (0.4,-1.2) {\scalebox{1}{$-r_1$}};
\draw[<->] (0.75,-0.75) -- (2.75,-2.95);
\node[black] at (2.75,-2.5) {\scalebox{1}{$\Omega$}};
\node[black] at (-2,1.5) {\scalebox{0.75}{$d(X)>0$}};
\node[black] at (-3.17,1.5) {\scalebox{0.75}{$d(X)=0$}};
\node[black] at (-0.75,1.25) {\scalebox{0.75}{$d(X)=0$}};
\draw[<->] (1.4,1.4)--(2.,2.25);
\draw[arrows=->] (1,0) -- (5,0); 
\draw[arrows=->] (0,1) -- (0,5);
\draw[arrows=->] (-1,0) -- (-5,0);
\draw[arrows=->] (0,-1) -- (0,-5);
\end{tikzpicture}
\caption{Model Describing $\Omega$, $\Gamma_{0}$, $\Gamma_{1}$ and $\omega$ in (OP3).}\label{ANNULUS}
\end{figure}
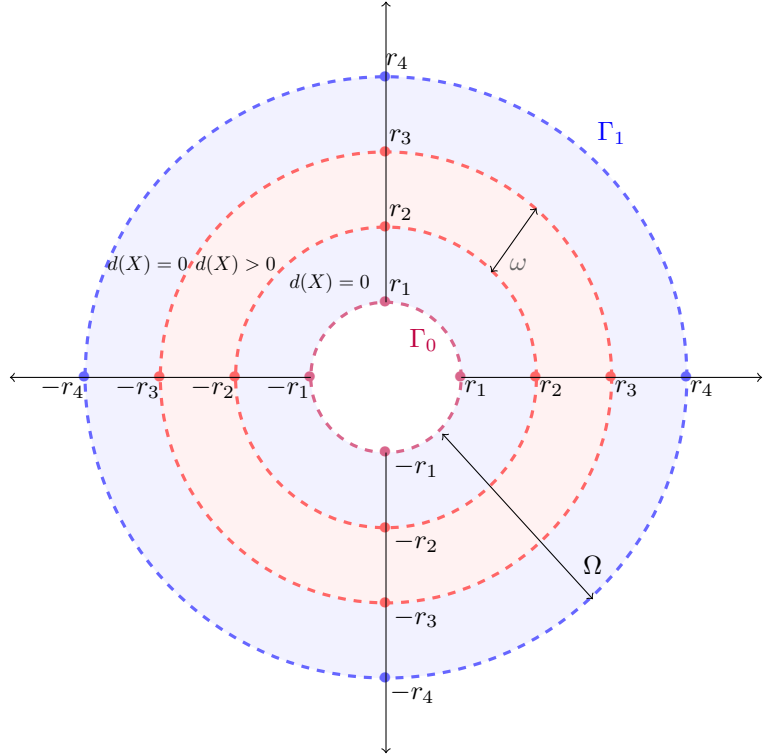


\end{document}